        \titleformat{\section}{\normalfont\large\bf}{\thesection.}{1ex}{\centering}
        \titleformat{\subsection}[runin]{\normalfont\bf}{\thesubsection.}{1ex}{}[.]
    \theoremstyle{plain}
        \newtheorem{theorem}{Theorem}[section]
        \newtheorem{lemma}[theorem]{Lemma}
        \newtheorem{proposition}[theorem]{Proposition}
    \theoremstyle{definition}
        \newtheorem{definition}[theorem]{Definition}
    \theoremstyle{remark}
        \newtheorem{remark}[theorem]{Remark}
    \DeclareMathOperator{\supp}{supp}
    \numberwithin{equation}{section}
\begin{document}
\begin{center}\large
    {\bf  Navier-Stokes blow-up rates in certain Besov spaces \\ whose regularity exceeds the critical value by
     $\boldsymbol{\epsilon\in[1,2]}$} \\\ \\
    Joseph P.\ Davies\footnote{University of Sussex, Brighton, UK, {\em jd535@sussex.ac.uk}} and Gabriel S. Koch\footnote{University of Sussex, Brighton, UK, {\em g.koch@sussex.ac.uk}}
\end{center}
\ \\
\abstract{For a solution $u$ to the Navier-Stokes equations in spatial dimension $n\geq3$ which blows up at a finite time $T>0$, we prove the blowup estimate ${\|u(t)\|}_{\dot{B}_{p,q}^{s_{p}+\epsilon}(\mathbb{R}^n)}\gtrsim_{\varphi,\epsilon,(p\vee q\vee 2)}{(T-t)}^{-\epsilon/2}$ for all $\epsilon\in[1,2)$ and $p,q\in[1,\frac{n}{2-\epsilon})$, where $s_{p}:=-1+\frac{n}{p}$ is the scaling-critical regularity, and $\varphi$ is the cutoff function used to define the Littlewood-Paley projections. For $\epsilon =2$, we prove the same type of estimate but only for $q=1$: ${\|u(t)\|}_{\dot{B}_{p,1}^{s_{p}+2}(\mathbb{R}^n)}\gtrsim_{\varphi,(p\vee 2)}{(T-t)}^{-1}$ for all $p\in [1,\infty)$. Under the additional restriction that $p,q\in[1,2]$ and $n=3$, these blowup estimates are implied by those first proved by Robinson, Sadowski and Silva (J. Math. Phys., 2012) for $p=q=2$ in the case $\epsilon\in(1,2)$, and by McCormick, Olson, Robinson, Rodrigo, Vidal-L\'{o}pez and Zhou (SIAM J. Math. Anal., 2016) for $p=2$ in the cases $(\epsilon,q)=(1,2)$ and $(\epsilon,q)=(2,1)$.}
\section{Introduction}
According to the physical theory, if an incompressible viscous Newtonian fluid occupies the whole space $\mathbb{R}^{n}$ in the absence of external forces, then the velocity $U(\tau,x)$ and kinematic pressure $\varPi(\tau,x)$ of the fluid at time $\tau>0$ and position $x\in\mathbb{R}^{n}$ satisfy the Navier-Stokes equations
\begin{equation*}
    \left\{\begin{array}{l}
        \partial_{\tau}U-\nu\Delta U+(U\cdot\nabla)U+\nabla\varPi=0, \\ \nabla\cdot U=0,
    \end{array}\right.
\end{equation*}
where the coefficient $\nu>0$ is the kinematic viscosity of the fluid. By considering the rescaled quantities
\begin{equation*}
    t=\nu\tau, \quad u(t,x)=\nu^{-1}U(\nu^{-1}t,x), \quad \varpi(t,x)=\nu^{-2}\varPi(\nu^{-1}t,x),
\end{equation*}
whose physical dimensions are powers of length alone, we may rewrite the Navier-Stokes equations in the standardised form
\begin{equation}\label{intro-navier-stokes}
    \left\{\begin{array}{l}
        \partial_{t}u-\Delta u+(u\cdot\nabla)u+\nabla\varpi=0, \\ \nabla\cdot u=0.
    \end{array}\right.
\end{equation}
At the formal level, if $(u,\varpi)$ satisfy \eqref{intro-navier-stokes} then $\varpi$ may be recovered from $u$ by the formula
\begin{equation}\label{pressure}
    \varpi = {(-\Delta)}^{-1}\nabla^{2}:(u\otimes u).
\end{equation}
Writing $\Lambda$ to denote the physical dimension of length, the quantities $x,t,u,\varpi$ have physical dimensions
\begin{equation*}
    [x] = \Lambda, \quad [t] = \Lambda^{2}, \quad [u] = \Lambda^{-1}, \quad [\varpi] = \Lambda^{-2};
\end{equation*}
this is related to the fact that the standardised Navier-Stokes equations are preserved under the rescaling
\begin{equation}\label{navier-stokes-scaling}
    x_{\lambda}=\lambda x, \quad t_{\lambda} = \lambda^{2}t, \quad u_{\lambda}(t_{\lambda},x_{\lambda}) = \lambda^{-1}u(t,x), \quad \varpi_{\lambda}(t_{\lambda},x_{\lambda})=\lambda^{-2}\varpi(t,x).
\end{equation}
\begin{definition}\label{regsoldef}
    For $T\in(0,\infty]$, a function $u:(0,T)\times\mathbb{R}^{n}\rightarrow\mathbb{R}^{n}$ is said to be a {\em regular solution} to the standardised Navier-Stokes equations on $(0,T)$ if
    \begin{enumerate}[label=(\roman*)]
        \item $u$ is smooth on $(0,T)\times\mathbb{R}^{n}$, with every derivative belonging to $C((0,T);L^{2}(\mathbb{R}^{n}))$;
        \item $(u,\varpi)$ satisfy \eqref{intro-navier-stokes}, with $\varpi$ being given by \eqref{pressure}.
    \end{enumerate}
\end{definition}
\begin{definition}
    For $T\in(0,\infty)$, a regular solution $u$ on $(0,T)$ is said to {\em blow up} at (rescaled) time $T$ if $u$ doesn't extend to a regular solution on $(0,T')$ for any $T'>T$.
\end{definition}
In spatial dimension $n\geq3$, it remains unknown whether there exist regular solutions which blow up. Since Leray's seminal paper \cite{leray1934}, prospective blowing-up solutions have been studied using homogeneous norms:
\begin{definition}
    A norm ${\|\cdot\|}_{X}$ (defined on a subspace $\mathcal{X}\subseteq\mathcal{S}'(\mathbb{R}^{n})$ which is closed under dilations of $\mathbb{R}^{n}$) is said to be {\em homogeneous of degree $\alpha=\alpha(X)$} if, under the rescaling $x_{\lambda}=\lambda x$ and $f_{\lambda}(x_{\lambda})=f(x)$, we have ${\|f_{\lambda}\|}_{X}\approx_{X}\lambda^{\alpha}{\|f\|}_{X}$ for all $\lambda\in(0,\infty)$ and $f\in\mathcal{X}$.
\end{definition}
If ${\|\cdot\|}_{X}$ is homogeneous of degree $\alpha$, then under the scaling \eqref{navier-stokes-scaling} of the Navier-Stokes equations we have ${\|u_{\lambda}(t_{\lambda})\|}_{X}\approx_{X}\lambda^{\alpha-1}{\|u(t)\|}_{X}$, so the quantity ${\|u\|}_{X}$ has physical dimension $\Lambda^{\alpha-1}$. In the context of the Navier-Stokes equations, the homogeneous norm ${\|\cdot\|}_{X}$ is said to be {\em subcritical} if $\alpha(X)<1$, {\em critical} if $\alpha(X)=1$, and {\em supercritical} if $\alpha(X)>1$. If ${\|\cdot\|}_{X}$ is subcritical, then the blowup estimate
\begin{equation}\label{abstract-blowup}
    u\text{ blows up at time }T\quad\Rightarrow\quad{\|u(t)\|}_{X}\gtrsim_{X}{(T-t)}^{-\left(1-\alpha(X)\right)/2}
\end{equation}
makes dimensional sense.

We investigate \eqref{abstract-blowup} in the context of the homogeneous Besov norms
\begin{equation*}
    {\|f\|}_{\dot{B}_{p,q}^{s}(\mathbb{R}^{n})} := {\left\|j\mapsto2^{js}{\left\|\mathcal{F}^{-1}\varphi(2^{-j}\xi)\mathcal{F}f\right\|}_{L^{p}(\mathbb{R}^{n})}\right\|}_{l^{q}(\mathbb{Z})} \quad \text{for }s\in\mathbb{R},\,p,q\in[1,\infty],
\end{equation*}
where $\mathcal{F}$ is the Fourier transform and $\varphi$ is a cutoff function satisfying certain properties.\footnote{We will give more detailed definitions in section \ref{besov-spaces}. Choosing a different function $\varphi$ yields an equivalent norm.} Amongst other things, for any $p, p_j,q,q_j \in [1,\infty]$ and $\delta,s \in \mathbb{R}$ the Besov norms satisfy
\begin{equation}\label{intro-besov-embedding}
    {\|f\|}_{\dot{B}_{p_{1},q_{1}}^{\frac{n}{p_{1}}+\delta}(\mathbb{R}^{n})} \gtrsim_{n} {\|f\|}_{\dot{B}_{p_{2},q_{2}}^{\frac{n}{p_{2}}+\delta}(\mathbb{R}^{n})} \quad \text{if }p_{1}\leq p_{2},\,q_{1}\leq q_{2},
\end{equation}
\begin{equation}\label{intro-lebesgue}
    {\|f\|}_{\dot{B}_{p,1}^{0}(\mathbb{R}^{n})} \geq {\|f\|}_{L^{p}(\mathbb{R}^{n})} \gtrsim_{\varphi} {\|f\|}_{\dot{B}_{p,\infty}^{0}(\mathbb{R}^{n})},
\end{equation}
\begin{equation}\label{intro-sobolev}
    {\|f\|}_{\dot{B}_{2,2}^{s}(\mathbb{R}^{n})} \approx_{\varphi} {\|f\|}_{\dot{H}^{s}(\mathbb{R}^{n})},
\end{equation}
and ${\|\cdot\|}_{\dot{B}_{p,q}^{s}(\mathbb{R}^{n})}$ is homogeneous of degree $\alpha(\dot{B}_{p,q}^{s}(\mathbb{R}^{n}))=\frac{n}{p}-s$. In the context of Navier-Stokes we define
\begin{equation*}
    s_{p} = s_{p}(n) := -1+\frac{n}{p},
\end{equation*}
so the norm ${\|\cdot\|}_{\dot{B}_{p,q}^{s_{p}+\epsilon}(\mathbb{R}^{n})}$ is critical for $\epsilon=0$, and subcritical for $\epsilon>0$.

It is known that if a regular solution $u$ blows up at a finite time $T>0$, then\footnote{These estimates follow from the local theory and regularity properties of mild solutions with initial data $f$, where the existence time is bounded below by a constant multiple of ${\|f\|}_{\dot{B}_{\infty,\infty}^{-1+\epsilon}(\mathbb{R}^{n})}^{-2/\epsilon}\vee{\|f\|}_{L^{\infty}(\mathbb{R}^{n})}^{-2}$. Estimate \eqref{intro-leray} comes from Leray \cite{leray1934}, while the local theory for initial data in Besov spaces is discussed in Lemari\'{e}-Rieusset's book \cite{lemarie2016}. We aim to give a more detailed account of the regularity properties of such solutions in an upcoming paper.}
\begin{equation}\label{intro-easy-besov}
    {\|u(t)\|}_{\dot{B}_{\infty,\infty}^{-1+\epsilon}(\mathbb{R}^{n})} \gtrsim_{\varphi,\epsilon} {(T-t)}^{-\epsilon/2} \quad \text{for all }t\in(0,T),\,\epsilon\in(0,1),
\end{equation}
\begin{equation}\label{intro-leray}
    {\|u(t)\|}_{L^{\infty}(\mathbb{R}^{n})} \gtrsim_{n} {(T-t)}^{-1/2} \quad \text{for all }t\in(0,T).
\end{equation}
By virtue of \eqref{intro-besov-embedding} and \eqref{intro-lebesgue}, the left-hand side of \eqref{intro-easy-besov} may be replaced by ${\|u(t)\|}_{\dot{B}_{p,q}^{s_{p}+\epsilon}(\mathbb{R}^{n})}$ for any $p,q\in[1,\infty]$, while the left-hand side of \eqref{intro-leray} may be replaced by ${\|u(t)\|}_{\dot{B}_{p,1}^{s_{p}+1}(\mathbb{R}^{n})}$ for any $p\in[1,\infty]$.

Adapting the energy methods of \cite{mccormick2016, robinson2014, robinson2012}, we will prove the following blowup estimates in the case $\epsilon\in[1,2]$:
\begin{theorem}\label{main-theorem}
    Let $n\geq3$ and $T\in(0,\infty)$. If $u$ is a regular solution (see Definition \ref{regsoldef}) to the standardised Navier-Stokes equations on $(0,T)$  which
    satisfies\footnote{Note that this is a natural assumption for a solution blowing up at a finite time $T$ in view of \eqref{intro-easy-besov} with $\epsilon =\tfrac 12$. In fact, to prove the parts of \eqref{eps-less-2} - \eqref{eps-is-2} with $\epsilon \neq 1$, one may replace this assumption with $\lim_{t\nearrow T}{\|u(t)\|}_{L^\infty(\mathbb{R}^{n})}=\infty$ (coming from \eqref{intro-leray}) by replacing \eqref{interpbesonehalf} in the proof with the  estimate
    $$    {\|u(t)\|}_{L^\infty(\mathbb{R}^n)} \leq {\|u(t)\|}_{\dot{B}_{\infty,1}^{0}(\mathbb{R}^n)} \lesssim_{n,\epsilon}{\|u(t)\|}_{\dot{B}_{\infty,\infty}^{-n/2}(\mathbb{R}^n)}^{\lambda}{\|u(t)\|}_{\dot{B}_{\infty,\infty}^{-1+\epsilon}(\mathbb{R}^n)}^{1-\lambda} \quad \text{with} \quad \lambda=\frac{\epsilon-1}{\epsilon-1+\frac{n}{2}}\, .
    $$
    }
      $\lim_{t\nearrow T}{\|u(t)\|}_{\dot{B}_{\infty,\infty}^{-1/2}(\mathbb{R}^{n})}=\infty$, then
    \begin{equation}\label{eps-less-2}
        {\|u(t)\|}_{\dot{B}_{p,q}^{s_{p}+\epsilon}(\mathbb{R}^{n})} \gtrsim_{\varphi,\epsilon,(p\vee q\vee2)} {(T-t)}^{-\epsilon/2} \quad \text{for all }t\in(0,T),\,\epsilon\in[1,2),\,p,q\in\left[1,\frac{n}{2-\epsilon}\right)
    \end{equation}
and
    \begin{equation}\label{eps-is-2}
        {\|u(t)\|}_{\dot{B}_{p,1}^{s_{p}+2}(\mathbb{R}^{n})} \gtrsim_{\varphi,(p\vee2)} {(T-t)}^{-1} \quad \text{for all }t\in(0,T),\,p\in[1,\infty).
    \end{equation}
\end{theorem}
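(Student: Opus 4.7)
The plan is to derive a differential inequality for $Y(t) := \|u(t)\|_{\dot{B}^{s_p+\epsilon}_{p,q}}$ via a Littlewood--Paley block-wise $L^p$ energy estimate combined with a Bony-type paraproduct bound on the nonlinearity, and to integrate it to obtain the blow-up rate. First I would apply each dyadic projector $\Delta_j := \mathcal{F}^{-1}\varphi(2^{-j}\cdot)\mathcal{F}$ to the Navier--Stokes equation (after composition with the Leray projector $\mathbb{P}$ to eliminate the pressure) and perform an $L^p$ energy estimate on $\Delta_j u$. Bernstein's inequality turns the dissipation into a dyadic multiplier and gives
$$\frac{d}{dt}\|\Delta_j u(t)\|_{L^p} + c\,2^{2j}\|\Delta_j u(t)\|_{L^p} \lesssim 2^j \|\Delta_j(u\otimes u)(t)\|_{L^p}.$$

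Next I would control $2^{j(s_p+\epsilon)}\|\Delta_j(u\otimes u)\|_{L^p}$ via Bony's paraproduct decomposition and take the $\ell^q$-norm in $j$ (Minkowski), aiming at an inequality of the form
$$\frac{d}{dt}Y(t) + c\,\|u(t)\|_{\dot{B}^{s_p+\epsilon+2}_{p,q}} \lesssim \|u(t)\|_{L^\infty}\,\|u(t)\|_{\dot{B}^{s_p+\epsilon+1}_{p,q}}.$$
The hypothesis $p,q<n/(2-\epsilon)$, equivalent to $s_p+\epsilon>1$, is precisely the threshold at which the low--high paraproducts close in $\ell^q$; at the endpoint $\epsilon=2$ this strict inequality degenerates and $q=1$ compensates via the borderline embedding $\dot{B}^{n/p}_{p,1}\hookrightarrow L^\infty$.

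A Gagliardo--Nirenberg-type interpolation of the right-hand side against $Y$ and $\|u\|_{\dot{B}^{s_p+\epsilon+2}_{p,q}}$ (with an auxiliary factor of $\|u\|_{\dot{B}^{-1/2}_{\infty,\infty}}$ to balance scalings when $\epsilon>1$, in the spirit of the footnoted interpolation), combined with Young's inequality to absorb the dissipation on the left, should reduce to a Riccati-type ODE of effective form $\tfrac{d}{dt}Y\lesssim Y^{1+2/\epsilon}$, the exponent being forced by the scaling degrees ($Y$ has degree $1-\epsilon$ and $\tfrac{d}{dt}$ contributes $-2$). Integrating from $t$ up to $T$, and invoking the hypothesis $\|u(s)\|_{\dot{B}^{-1/2}_{\infty,\infty}}\to\infty$ as $s\nearrow T$ (together with subcritical local well-posedness of $\dot{B}^{s_p+\epsilon}_{p,q}$ to force $Y(s)\to\infty$), then yields $Y(t)\gtrsim(T-t)^{-\epsilon/2}$.

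The hard part will be the nonlinear step: in contrast to the Hilbert setting of \cite{mccormick2016,robinson2012}, the paraproduct estimate must now be carried out in $\dot{B}^s_{p,q}$ for the full range of $(p,q)$, with extra care at the endpoints $p\in\{1,\infty\}$ where $\mathbb{P}$ is not bounded (requiring either truncation to high frequencies or a direct handling via the pressure formula \eqref{pressure}), and at the borderline $\epsilon=2$ where only the $\ell^1$-summability afforded by $q=1$ keeps the nonlinear term finite. Balancing the powers in the Gagliardo--Nirenberg interpolation so as to reach the sharp exponent $1+2/\epsilon$ (rather than a larger, slower-blowing one) is the other delicate point.
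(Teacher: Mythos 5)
Your outline has the right overall shape (dyadic energy estimate, paraproducts, interpolation, Young, the ODE lemma with the exponent $1+2/\epsilon$), but the central nonlinear step as written does not close, and the issue is exactly where you flag "the hard part". Your intermediate inequality
\begin{equation*}
    \frac{d}{dt}Y + c\,{\|u\|}_{\dot{B}_{p,q}^{s_{p}+\epsilon+2}} \lesssim {\|u\|}_{L^{\infty}}\,{\|u\|}_{\dot{B}_{p,q}^{s_{p}+\epsilon+1}}
\end{equation*}
cannot be reduced to a Riccati inequality in $Y$ alone when $\epsilon>1$: in the paper's scaling convention $Y\sim\Lambda^{-\epsilon}$ and the dissipation norm $\sim\Lambda^{-\epsilon-2}$, while ${\|u\|}_{L^{\infty}}\sim\Lambda^{-1}$, so writing $\Lambda^{-1}=\Lambda^{-\epsilon\theta}\Lambda^{(-\epsilon-2)(1-\theta)}$ forces $\theta=\frac{1+\epsilon}{2}\geq1$, i.e.\ $L^{\infty}$ lies \emph{outside} the interpolation range between $Y$ and the dissipation for every $\epsilon\geq1$ (with equality only at $\epsilon=1$, where one still needs $q=1$ for $\dot{B}_{p,q}^{n/p}\hookrightarrow L^{\infty}$). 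Your proposed remedy --- an auxiliary factor of ${\|u\|}_{\dot{B}_{\infty,\infty}^{-1/2}}$ --- is circular: that is precisely the quantity \emph{assumed to blow up}; the theorem provides no upper bound on it. The only uniformly bounded quantity available is the energy ${\|u\|}_{L^{2}}$, and inserting it breaks the scaling balance and produces a strictly worse exponent than $1+2/\epsilon$. A second, related problem is that estimating the nonlinearity as $2^{j}{\|\dot{\Delta}_{j}(u\otimes u)\|}_{L^{p}}$ discards the transport structure of $(u\cdot\nabla)u$, which is what generates the unavoidable $L^{\infty}$ factor in the first place.

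The paper's proof is built to avoid exactly this. It passes to the antisymmetric vorticity tensor $\omega_{ij}=\nabla_{i}u_{j}-\nabla_{j}u_{i}$ (so the pressure, and hence any boundedness issue for the Leray projector at $p\in\{1,\infty\}$, never appears), performs an $L^{r}$ energy estimate on $\dot{\Delta}_{J}\omega$ in which the transport term is converted into the commutator $[u\cdot\nabla,\dot{\Delta}_{J}]\omega$ (estimated in Proposition \ref{commutator-prop}), and, crucially, bounds the dissipation from below by $-\langle\Delta v,{|v|}^{r-2}v\rangle\gtrsim{\|v\|}_{L^{rn/(n-2)}}^{r}$ --- a gain in \emph{integrability} rather than in derivatives. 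With that choice the "rough" factor in the tame and commutator estimates is ${\|\nabla u\|}_{L^{r_{2}}}$ with $r_{2}<\infty$ chosen so that both ${\|\omega\|}_{\dot{B}_{r_{1},r}^{s_{r}+\epsilon-1}}$ and ${\|\omega\|}_{L^{r_{2}}}$ have scalings strictly between ${\|\omega\|}_{\dot{B}_{r,r}^{s_{r}+\epsilon-1}}$ and ${\|\omega\|}_{\dot{B}_{rn/(n-2),r}^{s_{r}+\epsilon-1}}$; only then does interpolation plus Young give $\partial_{t}({\|\omega\|}^{r})\lesssim{\|\omega\|}^{r+2/\epsilon}$ with the sharp exponent. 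The $L^{\infty}$-based tame estimate you describe does close, but only in the endpoint case $\epsilon=2$, $q=1$, applied to $\omega$ (where ${\|\omega\|}_{L^{\infty}}\lesssim{\|\omega\|}_{\dot{B}_{r,1}^{n/r}}$ has the same scaling as the controlled norm), which is exactly why the paper treats that case separately and without any dissipation term. To repair your argument you would need to import both missing ingredients: the vorticity/commutator reformulation and the integrability-gaining dissipation bound.
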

Under the additional restrictions that $p,q\in[1,2]$ and $n=3$, the blowup estimate \eqref{eps-less-2} is implied by the blowup estimate for $\dot{H}^{s_{2}+\epsilon}(\mathbb{R}^{3})$, which was proved in the case $\epsilon\in(1,2)$ by Robinson, Sadowski and Silva \cite{robinson2012}, and in the case $\epsilon=1$ by McCormick et al.\ \cite{mccormick2016}. Under the additional restrictions that $p\in[1,2]$ and $n=3$, the blowup estimate \eqref{eps-is-2} is implied by the blowup estimate for $\dot{B}_{2,1}^{5/2}(\mathbb{R}^{3})$, which was proved by McCormick et al.\ \cite{mccormick2016}.

The rest of this paper is organised as follows. In section \ref{besov-spaces} we recall some standard properties of Besov spaces, using \cite{bahouri2011} as our main reference. In section \ref{commutator-estimates} we prove some commutator estimates, adapting the ideas of \cite[Lemma 2.100]{bahouri2011}. In section \ref{navier-stokes-blowup-rates} we prove Theorem \ref{main-theorem}. We will henceforth use the abbreviations $L^{p}=L^{p}(\mathbb{R}^{n})$, $\dot{H}^{s}=\dot{H}^{s}(\mathbb{R}^{n})$, $\dot{B}_{p,q}^{s}=\dot{B}_{p,q}^{s}(\mathbb{R}^{n})$ and $l^{q}=l^{q}(\mathbb{Z})$.
\section{Besov spaces}\label{besov-spaces}
\begin{lemma}\label{bahouri-prop-2.10}
    (\cite{bahouri2011}, Proposition 2.10). Let $\mathcal{C}$ be the annulus $B(0,8/3)\setminus\overline{B}(0,3/4)$. Then the set $\widetilde{\mathcal{C}}=B(0,2/3)+\mathcal{C}$ is an annulus, and there exist radial functions $\chi\in\mathcal{D}(B(0,4/3))$ and $\varphi\in\mathcal{D}(\mathcal{C})$, taking values in $[0,1]$, such that
    \begin{equation*}
        \left\{\begin{array}{ll}
            \chi(\xi)+\sum_{j\geq0}\varphi(2^{-j}\xi)=1 & \forall\,\xi\in\mathbb{R}^{n}, \\
            \sum_{j\in\mathbb{Z}}\varphi(2^{-j}\xi)=1 & \forall\,\xi\in\mathbb{R}^{n}\setminus\{0\}, \\
            |j-j'|\geq2\Rightarrow\supp\varphi(2^{-j}\cdot)\cap\supp\varphi(2^{-j'}\cdot)=\emptyset, \\
            j\geq1\Rightarrow\supp\chi\cap\supp\varphi(2^{-j}\cdot)=\emptyset, \\
            |j-j'|\geq5\Rightarrow2^{j'}\widetilde{\mathcal{C}}\cap2^{j}\mathcal{C}=\emptyset, \\
            1/2\leq\chi^{2}(\xi)+\sum_{j\geq0}\varphi^{2}(2^{-j}\xi)\leq1 & \forall\,\xi\in\mathbb{R}^{n}, \\
            1/2\leq\sum_{j\in\mathbb{Z}}\varphi^{2}(2^{-j}\xi)\leq1 & \forall\,\xi\in\mathbb{R}^{n}\setminus\{0\}. \\
        \end{array}\right.
    \end{equation*}
\end{lemma}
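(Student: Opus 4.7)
The plan is to construct $\chi$ first as a radial, radially non-increasing bump function, and to define $\varphi$ via the telescoping difference $\varphi(\xi):=\chi(\xi/2)-\chi(\xi)$, from which all required properties follow by elementary manipulations. Concretely, I take a smooth radial $\chi$ which equals $1$ on an open neighbourhood of $\overline{B}(0,3/4)$, has compact support inside $B(0,4/3)$, is monotonically non-increasing in $|\xi|$, and takes values in $[0,1]$; this is the standard mollification of a radial cutoff of an interval of radii. Then $\varphi$ is automatically radial and smooth, the monotonicity of $\chi$ gives $\varphi\geq 0$, and $\chi\leq 1$ gives $\varphi\leq 1$. On the neighbourhood where $\chi\equiv 1$ both $\chi(\xi)$ and $\chi(\xi/2)$ equal $1$, so $\varphi$ vanishes there; since also $\supp\varphi\subseteq\supp\chi(\cdot/2)\cup\supp\chi\subseteq B(0,8/3)$, we conclude $\varphi\in\mathcal{D}(\mathcal{C})$.

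The two partition-of-unity identities are then immediate from telescoping: the partial sums satisfy
\[
    \chi(\xi)+\sum_{j=0}^{N}\varphi(2^{-j}\xi)=\chi(2^{-(N+1)}\xi)\to 1 \quad \text{as }N\to\infty
\]
for each $\xi\in\mathbb{R}^{n}$, and
\[
    \sum_{j=-N}^{N}\varphi(2^{-j}\xi)=\chi(2^{-(N+1)}\xi)-\chi(2^{N}\xi)\to 1 \quad \text{as }N\to\infty
\]
for each $\xi\neq 0$; at any fixed $\xi$ these are actually finite sums because $\varphi$ is supported away from the origin and from infinity. The three support-disjointness statements reduce to numerical comparisons of annulus radii: $\supp\varphi(2^{-j}\cdot)\subseteq\{3\cdot 2^{j}/4\leq|\xi|\leq 8\cdot 2^{j}/3\}$, so two such annuli are disjoint once the scale ratio exceeds $32/9$, which holds whenever $|j-j'|\geq 2$; the disjointness of $\supp\chi$ and $\supp\varphi(2^{-j}\cdot)$ for $j\geq 1$ is immediate from $4/3<3/2$; and since $\widetilde{\mathcal{C}}\subseteq\{1/12<|\xi|<10/3\}$, we have $2^{j'}\widetilde{\mathcal{C}}\cap 2^{j}\mathcal{C}=\emptyset$ as soon as $2^{|j-j'|}\geq(8/3)/(1/12)=32$, matching $|j-j'|\geq 5$ exactly.

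For the square-sum bounds, the key observation is that at each fixed $\xi$ at most two of the functions $\chi,\varphi(\cdot),\varphi(\cdot/2),\ldots$ can be nonzero: the ratio $32/9<4$ forces only two consecutive dyadic scales to overlap at any point, and $\supp\chi$ meets $\supp\varphi(2^{-j}\cdot)$ only for $j=0$. Since the nonzero values lie in $[0,1]$ and sum to $1$ by the partition identities, they consist either of a single $1$ (sum of squares $=1$) or of a pair $\{a,1-a\}$ with $a\in(0,1)$ (sum of squares $=2a^{2}-2a+1\in[1/2,1)$), yielding both bounds. The only mildly delicate point is the bookkeeping of strict versus non-strict inclusions when choosing the ``fattening'' of $\{\chi=1\}$ so that $\supp\varphi$ lies in the open annulus $\mathcal{C}$ rather than its closure; beyond that, everything reduces to routine numerical checks.
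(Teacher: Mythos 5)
Your construction is correct, and it is worth noting that the paper itself offers no proof of this lemma at all: it is imported verbatim from Bahouri--Chemin--Danchin, Proposition 2.10. Your telescoping construction $\varphi(\xi)=\chi(\xi/2)-\chi(\xi)$ from a radial, radially non-increasing $\chi$ is a standard and slightly more elementary alternative to the route taken in the cited reference, where one instead starts from a bump $\theta$ supported in $\mathcal{C}$ whose dyadic dilates cover $\mathbb{R}^{n}\setminus\{0\}$ and normalises, $\varphi:=\theta/\sum_{j}\theta(2^{-j}\cdot)$, recovering $\chi:=1-\sum_{j\geq0}\varphi(2^{-j}\cdot)$ afterwards. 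What the telescoping buys is that both partition identities are immediate finite sums and the non-negativity of $\varphi$ falls out of monotonicity of $\chi$; what the normalisation approach buys is independence from any monotonicity assumption. All of your numerical checks are right: the radius ratio $(8/3)/(3/4)=32/9<4$ gives the two-scale overlap and hence the $|j-j'|\geq2$ disjointness, $\widetilde{\mathcal{C}}\subseteq\{1/12<|\xi|<10/3\}$ gives the $|j-j'|\geq5$ statement (your single criterion $2^{|j-j'|}\geq32$ does cover both orderings of $j,j'$, since the case $j>j'$ only needs $2^{j-j'}\geq 40/9$), and the identity $a^{2}+(1-a)^{2}\in[1/2,1]$ for $a\in[0,1]$ gives the square-sum bounds once you know at most two terms are active at any point. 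The only item you assert without proof is that $\widetilde{\mathcal{C}}$ \emph{is} an annulus rather than merely contained in one; this requires the one-line observation that every $\xi$ with $1/12<|\xi|<10/3$ can be written as $a+c$ with $|a|<2/3$ and $c$ a suitable radial multiple of $\xi$ in $\mathcal{C}$, so $\widetilde{\mathcal{C}}=\{1/12<|\xi|<10/3\}$ exactly. With that sentence added, the proof is complete.
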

We fix $\chi,\varphi$ satisfying Lemma \ref{bahouri-prop-2.10}. For $j\in\mathbb{Z}$ and $u\in\mathcal{S}'$, we define\footnote{We adopt the convention that $\mathcal{F}f(\xi) = \int_{\mathbb{R}^{n}}e^{-\mathrm{i}\xi\cdot x}f(x)\,\mathrm{d}x$ for $f\in\mathcal{S}$. We recall that the Fourier transform of a compactly supported distribution is a smooth function.}
\begin{equation*}
    \dot{S}_{j}u := \chi(2^{-j}D)u = \mathcal{F}^{-1}\chi(2^{-j}\xi)\mathcal{F}u,
\end{equation*}
\begin{equation*}
    \dot{\Delta}_{j}u := \varphi(2^{-j}D)u = \mathcal{F}^{-1}\varphi(2^{-j}\xi)\mathcal{F}u.
\end{equation*}
\begin{lemma}\label{truncation-lemma}
    For any $j,j'\in\mathbb{Z}$ and $u,v\in\mathcal{S}'$, we have
    \begin{equation*}
        |j-j'|\geq 2\Rightarrow \dot{\Delta}_{j}\dot{\Delta}_{j'}u=0, \qquad |j-j'|\geq5\Rightarrow\dot{\Delta}_{j}\left(\dot{S}_{j'-1}u\,\dot{\Delta}_{j'}v\right)=0.
    \end{equation*}
\end{lemma}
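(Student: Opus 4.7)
The plan is to reduce both statements to Fourier support considerations and then invoke the relevant disjointness properties listed in Lemma \ref{bahouri-prop-2.10}.

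For the first implication, I would observe that $\dot{\Delta}_j \dot{\Delta}_{j'} u$ is the inverse Fourier transform of $\varphi(2^{-j}\xi)\varphi(2^{-j'}\xi)\mathcal{F}u$, so its spectrum lies in $\supp\varphi(2^{-j}\cdot) \cap \supp\varphi(2^{-j'}\cdot)$. The third bullet of Lemma \ref{bahouri-prop-2.10} asserts that this intersection is empty once $|j-j'|\geq 2$, so the product vanishes as a distribution and the claim follows. The only subtlety is that $\mathcal{F}u$ is a distribution rather than a function, but since $\varphi(2^{-j}\cdot)$ and $\varphi(2^{-j'}\cdot)$ have disjoint compact supports one can multiply in either order (e.g.\ after convolving $u$ with the Schwartz function $\mathcal{F}^{-1}\varphi(2^{-j'}\cdot)$, which produces a tempered distribution whose Fourier transform is supported in $\supp\varphi(2^{-j'}\cdot)$ — a set disjoint from $\supp\varphi(2^{-j}\cdot)$).

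For the second implication, the key is to locate the Fourier support of the product $\dot{S}_{j'-1}u\,\dot{\Delta}_{j'}v$. Since multiplication on the physical side corresponds to convolution on the Fourier side, its spectrum is contained in
\[
\supp\chi(2^{-(j'-1)}\cdot) + \supp\varphi(2^{-j'}\cdot) \subseteq 2^{j'-1}B(0,4/3) + 2^{j'}\mathcal{C} = 2^{j'}\bigl(B(0,2/3)+\mathcal{C}\bigr) = 2^{j'}\widetilde{\mathcal{C}}.
\]
Applying $\dot{\Delta}_j$ localises the spectrum further to $2^{j}\mathcal{C}\cap 2^{j'}\widetilde{\mathcal{C}}$, and the fifth bullet of Lemma \ref{bahouri-prop-2.10} guarantees that this intersection is empty once $|j-j'|\geq 5$, yielding the result.

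The only mild technical point to be careful with — which I would treat as the main (and modest) obstacle — is justifying the convolution-of-spectra identity when $u,v$ are merely tempered distributions, since the product $\dot{S}_{j'-1}u\,\dot{\Delta}_{j'}v$ need not a priori be well-defined in $\mathcal{S}'$. However, $\dot{S}_{j'-1}u = \chi(2^{-(j'-1)}D)u$ is a smooth function of polynomial growth (being the convolution of $u$ with a Schwartz function and having Fourier support in a compact set), and likewise $\dot{\Delta}_{j'}v$ is a smooth, slowly growing function. Their product is therefore a well-defined tempered distribution, and the standard convolution-of-Fourier-transforms identity applies once either factor is convolved against a Schwartz function (e.g.\ $\mathcal{F}^{-1}\varphi(2^{-j}\cdot)$) when we apply $\dot{\Delta}_j$ from the outside; this legitimises the support computation and closes the argument.
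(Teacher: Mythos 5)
Your proof is correct and takes essentially the same approach as the paper: both implications are reduced to the Fourier-support disjointness statements of Lemma \ref{bahouri-prop-2.10}, with the spectrum of $\dot{S}_{j'-1}u\,\dot{\Delta}_{j'}v$ located inside $2^{j'}\widetilde{\mathcal{C}}$ by the convolution-of-supports argument. The additional care you take over the distributional technicalities is sound but goes beyond what the paper records in its one-line proof and footnote.
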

\begin{proof}
    This is a consequence of Lemma \ref{bahouri-prop-2.10}. In particular, the implication $|j-j'|\geq 2\Rightarrow \dot{\Delta}_{j}\dot{\Delta}_{j'}u=0$ follows from the implication $|j-j'|\geq2\Rightarrow\supp\varphi(2^{-j}\cdot)\cap\supp\varphi(2^{-j'}\cdot)=\emptyset$, while the implication $|j-j'|\geq5\Rightarrow\dot{\Delta}_{j}\left(\dot{S}_{j'-1}u\,\dot{\Delta}_{j'}v\right)=0$ follows\footnote{Note that $\dot{S}_{j'-1}u$ is spectrally supported on $2^{j'}B(0,2/3)$, while $\dot{\Delta}_{j'}v$ is spectrally supported on $2^{j'}\mathcal{C}$, so by properties of convolution we have that $\dot{S}_{j'-1}u\,\dot{\Delta}_{j'}v$ is spectrally supported on $2^{j'}\widetilde{\mathcal{C}}$.} from the implication $|j-j'|\geq5\Rightarrow2^{j'}\widetilde{\mathcal{C}}\cap2^{j}\mathcal{C}=\emptyset$.
\end{proof}

We recall the following useful properties:
\begin{lemma}\label{useful-inequalities}
    (\cite{bahouri2011}, Lemmas 2.1-2.2, Remark 2.11). Let $\rho$ be a smooth function on $\mathbb{R}^{n}\setminus\{0\}$ which is positive homogeneous of degree $\lambda\in\mathbb{R}$. Then for all $j\in\mathbb{Z}$, $u\in\mathcal{S}'$, $t\in(0,\infty)$ and $1\leq p\leq q\leq\infty$ we have
    \begin{equation}\label{useful-inequality-1}
        {\|\dot{S}_{j}u\|}_{L^{p}}\vee{\|\dot{\Delta}_{j}u\|}_{L^{p}} \lesssim_{\varphi} {\|u\|}_{L^{p}},
    \end{equation}
    \begin{equation}
        {\|\rho(D)\dot{\Delta}_{j}u\|}_{L^{q}} \lesssim_{\rho} 2^{j\lambda}2^{j\left(\frac{n}{p}-\frac{n}{q}\right)}{\|\dot{\Delta}_{j}u\|}_{L^{p}},
    \end{equation}
    \begin{equation}
        {\|\dot{\Delta}_{j}u\|}_{L^{p}} \lesssim_{n} 2^{-j}{\|\nabla\dot{\Delta}_{j}u\|}_{L^{p}}.
    \end{equation}
\end{lemma}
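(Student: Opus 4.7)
The plan is to establish all three inequalities as classical Bernstein-type estimates, obtained from Young's convolution inequality combined with the compact spectral support of the Littlewood--Paley blocks and (for the second estimate) the positive homogeneity of $\rho$.

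For \eqref{useful-inequality-1}, I would write $\dot{S}_{j}u = \mathcal{F}^{-1}[\chi(2^{-j}\cdot)] * u = 2^{jn}(\mathcal{F}^{-1}\chi)(2^{j}\cdot) * u$, where $\mathcal{F}^{-1}\chi\in\mathcal{S}$ is in particular integrable. Young's inequality then gives $\|\dot{S}_{j}u\|_{L^{p}} \leq \|\mathcal{F}^{-1}\chi\|_{L^{1}}\|u\|_{L^{p}}$, with the constant independent of $j$ by the scale-invariance of the $L^{1}$ norm. The identical computation with $\varphi$ in place of $\chi$ handles $\dot{\Delta}_{j}u$.

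For the second inequality, the key trick is to introduce an enlarged cutoff $\tilde{\varphi}\in\mathcal{D}(\mathbb{R}^{n}\setminus\{0\})$ with $\tilde{\varphi}\equiv 1$ on $\supp\varphi$, so that $g:=\rho\tilde{\varphi}$ belongs to $\mathcal{D}(\mathbb{R}^{n})$ and hence $\mathcal{F}^{-1}g\in\mathcal{S}$. Since $\varphi(2^{-j}\xi) = \tilde{\varphi}(2^{-j}\xi)\varphi(2^{-j}\xi)$, and since positive homogeneity of $\rho$ of degree $\lambda$ gives $\rho(\xi)\tilde{\varphi}(2^{-j}\xi) = 2^{j\lambda}\rho(2^{-j}\xi)\tilde{\varphi}(2^{-j}\xi) = 2^{j\lambda}g(2^{-j}\xi)$, I obtain
\[
\rho(D)\dot{\Delta}_{j}u \;=\; 2^{j\lambda}\bigl(2^{jn}(\mathcal{F}^{-1}g)(2^{j}\cdot)\bigr) * \dot{\Delta}_{j}u.
\]
Applying Young's inequality with $r\in[1,\infty]$ defined by $1+\tfrac{1}{q}=\tfrac{1}{r}+\tfrac{1}{p}$ (which is admissible because $p\leq q$) gives the claimed factor $2^{j\lambda}2^{j(n/p-n/q)}$, using $\|2^{jn}(\mathcal{F}^{-1}g)(2^{j}\cdot)\|_{L^{r}} = 2^{jn(1-1/r)}\|\mathcal{F}^{-1}g\|_{L^{r}}$ together with $1-\tfrac{1}{r}=\tfrac{1}{p}-\tfrac{1}{q}$.

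The third inequality I would deduce directly from the second. For $\xi\neq 0$ one has $\sum_{k=1}^{n}(-i\xi_{k}/|\xi|^{2})(i\xi_{k}) = 1$, so setting $m_{k}(\xi):=-i\xi_{k}/|\xi|^{2}$ (smooth on $\mathbb{R}^{n}\setminus\{0\}$ and positive homogeneous of degree $-1$) and noting that $\mathcal{F}\dot{\Delta}_{j}u$ vanishes on a neighbourhood of the origin, we get $\dot{\Delta}_{j}u = \sum_{k}m_{k}(D)\partial_{k}\dot{\Delta}_{j}u = \sum_{k}m_{k}(D)\dot{\Delta}_{j}(\partial_{k}u)$. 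Applying the second inequality with $\rho=m_{k}$, $\lambda=-1$, and $p=q$ to each summand and summing in $k$ yields $\|\dot{\Delta}_{j}u\|_{L^{p}}\lesssim_{n}2^{-j}\|\nabla\dot{\Delta}_{j}u\|_{L^{p}}$. The main (mild) obstacle is the insertion of the auxiliary cutoff $\tilde{\varphi}$ in the second estimate: this is necessary because $\rho$ is in general singular at $\xi=0$ and so does not itself produce an integrable convolution kernel, but once the cutoff is in place the remainder of the argument is just bookkeeping of Young's exponents.
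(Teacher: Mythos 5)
Your proposal is correct and is essentially the standard argument for these Bernstein-type estimates (the one given in the cited reference, Bahouri--Chemin--Danchin, which the paper invokes without reproducing a proof): Young's inequality with the rescaled kernel for \eqref{useful-inequality-1} and the $L^p$--$L^q$ bound, the auxiliary cutoff $\tilde{\varphi}$ to tame the singularity of $\rho$ at the origin, and the multipliers $m_{k}(\xi)=-\mathrm{i}\xi_{k}/|\xi|^{2}$ for the reverse Bernstein inequality. All exponent bookkeeping checks out, so there is nothing to add.
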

One can give meaning to the decomposition $u=\sum_{j\in\mathbb{Z}}\dot{\Delta}_{j}u$ in view of the following lemma:
\begin{lemma}\label{littlewood-paley-decomposition}
    (\cite{bahouri2011}, Propositions 2.12-2.14) If $u\in\mathcal{S}'$, then $\dot{S}_{j}u\overset{j\rightarrow\infty}{\rightarrow}u$ in $\mathcal{S}'$. Define\footnote{For example, if $\mathcal{F}u$ is locally integrable near $\xi=0$, then $u\in\mathcal{S}_{h}'$. We remark that the condition $u\in\mathcal{S}_{h}'$ is independent of our choice of $\varphi$.}
    \begin{equation*}
        \mathcal{S}_{h}' := \left\{u\in\mathcal{S}'\text{ }:\text{ }{\|\dot{S}_{j}u\|}_{L^{\infty}}\overset{j\rightarrow-\infty}{\rightarrow}0\right\},
    \end{equation*}
    so if $u\in\mathcal{S}_{h}'$ then $u=\sum_{j\in\mathbb{Z}}\dot{\Delta}_{j}u$ in $\mathcal{S}'$.
\end{lemma}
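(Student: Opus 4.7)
The plan is to establish the two claims in order. For the first claim, that $\dot{S}_{j} u \to u$ in $\mathcal{S}'$ as $j \to \infty$ for every $u \in \mathcal{S}'$, I would argue by duality: since $\chi$ is real and even, $\langle \dot{S}_{j} u, \phi\rangle = \langle u, \chi(2^{-j}D)\phi\rangle$ for $\phi \in \mathcal{S}$, so it suffices to show $\chi(2^{-j}D)\phi \to \phi$ in $\mathcal{S}$, equivalently $\chi(2^{-j}\xi)\mathcal{F}\phi(\xi) \to \mathcal{F}\phi(\xi)$ in $\mathcal{S}$. The key observation is that $\chi(0)=1$, which follows from evaluating the partition identity $\chi(\xi)+\sum_{j\ge 0}\varphi(2^{-j}\xi)=1$ at $\xi=0$ together with the fact that $\varphi$ vanishes near the origin. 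Then $\chi(2^{-j}\xi)\to 1$ pointwise and uniformly boundedly, while $\partial^{\beta}[\chi(2^{-j}\xi)]=2^{-j|\beta|}(\partial^{\beta}\chi)(2^{-j}\xi)$ decays at rate $2^{-j|\beta|}$ for $|\beta|\ge 1$. Combined with Schwartz decay of $\mathcal{F}\phi$, this yields decay of every Schwartz seminorm $\sup_{\xi}|\xi^{\alpha}\partial^{\beta}[(\chi(2^{-j}\xi)-1)\mathcal{F}\phi(\xi)]|$ by dominated convergence, splitting into $|\xi|\le R$ and $|\xi|>R$.

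For the second claim, I would first derive from Lemma \ref{bahouri-prop-2.10} the telescoping identity
\begin{equation*}
    \chi(2^{-k}\xi) - \chi(2^{-k'}\xi) \;=\; \sum_{j=k'}^{k-1}\varphi(2^{-j}\xi) \qquad (k' < k)
\end{equation*}
by subtracting the two rescaled partition identities $\chi(2^{-m}\xi)+\sum_{j\ge m}\varphi(2^{-j}\xi)=1$ at $m=k$ and $m=k'$ (the rescaled identities themselves follow by substituting $\xi\mapsto 2^{-m}\xi$ in the unrescaled one). Applying this with $k=N+1$ and $k'=-N$ yields $\sum_{j=-N}^{N}\dot{\Delta}_{j}u = \dot{S}_{N+1}u - \dot{S}_{-N}u$. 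As $N\to\infty$ the first term tends to $u$ in $\mathcal{S}'$ by the first claim, while the hypothesis $u\in\mathcal{S}_h'$ gives $\|\dot{S}_{-N}u\|_{L^\infty}\to 0$, and the crude bound $|\langle \dot{S}_{-N}u,\phi\rangle| \le \|\dot{S}_{-N}u\|_{L^\infty}\|\phi\|_{L^1}$ for $\phi\in\mathcal{S}$ then forces the second term to $0$ in $\mathcal{S}'$. Combining these gives $\sum_{j\in\mathbb{Z}}\dot{\Delta}_{j}u = u$ in $\mathcal{S}'$.

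The main obstacle is the Schwartz-convergence step in the first claim: one must uniformly control all seminorms, with care needed for the $\beta=0$ term where the multiplier only converges pointwise. This is handled by using Schwartz decay of $\mathcal{F}\phi$ to truncate to a large ball on which $\chi(2^{-j}\xi)$ is uniformly close to $1$. Once this is in hand, the telescoping identity and distributional estimate in the second claim are essentially bookkeeping.
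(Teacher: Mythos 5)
Your proof is correct and follows the standard argument for this cited result (the paper itself just refers to \cite{bahouri2011}, Propositions 2.12--2.14, whose proof is exactly this): duality plus $\chi\equiv1$ near the origin for the first claim, and the telescoping identity $\sum_{j=-N}^{N}\dot{\Delta}_{j}u=\dot{S}_{N+1}u-\dot{S}_{-N}u$ combined with the $L^{\infty}\to0$ hypothesis for the second. No gaps worth noting.
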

For $s\in\mathbb{R}$ and $p,q\in[1,\infty]$, we define the Besov seminorm\footnote{Choosing a different function $\varphi$ yields an equivalent seminorm \cite[Remark 2.17]{bahouri2011}.}
\begin{equation*}
    {\|u\|}_{\dot{B}_{p,q}^{s}} := {\left\|j\mapsto2^{js}{\|\dot{\Delta}_{j}u\|}_{L^{p}}\right\|}_{l^{q}} \quad \text{for }u\in\mathcal{S}'
\end{equation*}
and the Besov space
\begin{equation*}
    \dot{B}_{p,q}^{s} := \left\{u\in\mathcal{S}_{h}'\text{ : }{\|u\|}_{\dot{B}_{p,q}^{s}}<\infty\right\},
\end{equation*}
so that $\left(\dot{B}_{p,q}^{s},{\|\cdot\|}_{\dot{B}_{p,q}^{s}}\right)$ is a normed space \cite[Proposition 2.16]{bahouri2011}. Lemma \ref{useful-inequalities} and Lemma \ref{littlewood-paley-decomposition} yield the inequalities
\begin{equation}\label{besov-embedding}
    {\|u\|}_{\dot{B}_{p_{2},q_{2}}^{\frac{n}{p_{2}}+\epsilon}} \lesssim_{n} {\|u\|}_{\dot{B}_{p_{1},q_{1}}^{\frac{n}{p_{1}}+\epsilon}} \quad \text{for }p_{1}\leq p_{2},\,q_{1}\leq\,q_{2},\,\epsilon\in\mathbb{R},\,u\in\mathcal{S}',
\end{equation}
\begin{equation}\label{rough-lp}
    {\|u\|}_{\dot{B}_{p,\infty}^{0}} \lesssim_{\varphi} {\|u\|}_{L^{p}} \quad \text{for }u\in\mathcal{S}'
\end{equation}
and
\begin{equation}\label{smooth-lp}
    {\|u\|}_{L^{p}} \leq {\|u\|}_{\dot{B}_{p,1}^{0}} \quad \text{for }u\in\mathcal{S}_{h}'.
\end{equation}
We also have the interpolation inequalities
\begin{equation}\label{interpolation-holder}
    {\|u\|}_{\dot{B}_{\frac{p_{1}p_{2}}{\lambda p_{2}+(1-\lambda)p_{1}},\frac{q_{1}q_{2}}{\lambda q_{2}+(1-\lambda)q_{1}}}^{\lambda s_{1}+(1-\lambda)s_{2}}} \leq {\|u\|}_{\dot{B}_{p_{1},q_{1}}^{s_{1}}}^{\lambda}{\|u\|}_{\dot{B}_{p_{2},q_{2}}^{s_{2}}}^{1-\lambda} \quad \text{for }\lambda\in(0,1),\,u\in\mathcal{S}',
\end{equation}
\begin{equation}\label{interpolation-geometric}
    {\|u\|}_{\dot{B}_{p,1}^{\lambda s_{1}+(1-\lambda)s_{2}}} \lesssim \frac{1}{\lambda(1-\lambda)(s_{2}-s_{1})}{\|u\|}_{\dot{B}_{p,\infty}^{s_{1}}}^{\lambda}{\|u\|}_{\dot{B}_{p,\infty}^{s_{2}}}^{1-\lambda} \quad \text{for }\lambda\in(0,1),\,s_{1}<s_{2},\,u\in\mathcal{S}',
\end{equation}
where \eqref{interpolation-holder} comes from H\"{o}lder's inequality, while \eqref{interpolation-geometric} comes from writing $\sum_{j\in\mathbb{Z}}=\sum_{j\leq j_{0}}+\sum_{j>j_{0}}$ with $2^{j_{0}(s_{2}-s_{1})}{\|u\|}_{\dot{B}_{p,\infty}^{s_{1}}}\approx{\|u\|}_{\dot{B}_{p,\infty}^{s_{2}}}$ and applying geometric series.

We now recall the following convergence lemma:
\begin{lemma}\label{convergence-lemma}
    (\cite{bahouri2011}, Lemma 2.23). Let $\mathcal{C}'$ be an annulus and ${(u_{j})}_{j\in\mathbb{Z}}$ be a sequence of functions such that $\supp\mathcal{F}u_{j}\subseteq2^{j}\mathcal{C}'$ and ${\left\|j\mapsto2^{js}{\|u_{j}\|}_{L^{p}}\right\|}_{l^{q}}<\infty$. If the series $\sum_{j\in\mathbb{Z}}u_{j}$ converges in $\mathcal{S}'$ to some $u\in\mathcal{S}'$, then
    \begin{equation*}\label{convergence-inequality}
        {\|u\|}_{\dot{B}_{p,q}^{s}} \lesssim_{\varphi} C_{\mathcal{C}'}^{1+|s|}{\left\|j\mapsto2^{js}{\|u_{j}\|}_{L^{p}}\right\|}_{l^{q}}.
    \end{equation*}
    Note: If $(s,p,q)$ satisfy the condition
    \begin{equation}\label{negative-scaling}
        s<\frac{n}{p}, \quad \text{or} \quad s=\frac{n}{p}\text{ and }q=1,
    \end{equation}
    then the hypothesis of convergence is satisfied, and $u\in\mathcal{S}_{h}'$.
\end{lemma}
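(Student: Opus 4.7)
The statement naturally decomposes into two pieces: the main Besov inequality (assuming convergence of $\sum_{j} u_{j}$ in $\mathcal{S}'$) and the parenthetical ``Note'' (which supplies the convergence and $\mathcal{S}_{h}'$-membership under the scaling condition \eqref{negative-scaling}). I plan to handle these separately, obtaining the main inequality via spectral almost-orthogonality and the Note by testing against Schwartz functions.

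For the Besov bound I would begin with the observation that the spectral support of $\dot{\Delta}_{j} u_{k}$ lies in $2^{j}\mathcal{C}\cap 2^{k}\mathcal{C}'$, and these sets are disjoint as soon as $|j-k|$ exceeds some integer $N=N(\varphi,\mathcal{C}')$ (same mechanism as in Lemma \ref{truncation-lemma}). Since $\dot{\Delta}_{j}$ is continuous on $\mathcal{S}'$, applying it to $u=\sum_{k} u_{k}$ collapses the series to a finite sum $\dot{\Delta}_{j}u=\sum_{|k-j|\leq N}\dot{\Delta}_{j}u_{k}$. The bound $\|\dot{\Delta}_{j}u_{k}\|_{L^{p}}\lesssim_{\varphi}\|u_{k}\|_{L^{p}}$ from \eqref{useful-inequality-1}, together with the elementary estimate $2^{js}\leq 2^{N|s|}2^{ks}$ for $|j-k|\leq N$, gives
\[
    2^{js}\|\dot{\Delta}_{j}u\|_{L^{p}} \;\lesssim_{\varphi}\; 2^{N|s|}\sum_{|k-j|\leq N}2^{ks}\|u_{k}\|_{L^{p}}.
\]
Taking the $l^{q}$ norm in $j$ of this discrete convolution and invoking Young's inequality (the convolving kernel being the indicator of an integer window of width $2N+1$) yields the claim, with overall constant of order $2^{N|s|}(2N+1)$, which absorbs into the stated $C_{\mathcal{C}'}^{1+|s|}$ upon taking, say, $C_{\mathcal{C}'}:=2^{N+1}$.

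For the Note I would verify convergence of $\sum_{j} u_{j}$ in $\mathcal{S}'$ by showing $\sum_{j}|\langle u_{j},\psi\rangle|<\infty$ for each $\psi\in\mathcal{S}$. Pick a smooth cutoff $\tilde{\varphi}\in\mathcal{D}(\mathbb{R}^{n}\setminus\{0\})$ equal to $1$ on $\mathcal{C}'$, so that $u_{j}=\tilde{\varphi}(2^{-j}D)u_{j}$; H\"older then gives $|\langle u_{j},\psi\rangle|\leq\|u_{j}\|_{L^{p}}\|\tilde{\varphi}(2^{-j}D)\psi\|_{L^{p'}}$. For $j\geq 0$ the rapid decay of $\hat{\psi}$ on $\{|\xi|\sim 2^{j}\}$ drives every Schwartz seminorm of $\tilde{\varphi}(2^{-j}D)\psi$ to zero faster than any polynomial in $2^{-j}$, beating any polynomial growth of $\|u_{j}\|_{L^{p}}$. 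For $j<0$ I would use $\|\tilde{\varphi}(2^{-j}D)\psi\|_{L^{p'}}\lesssim_{\psi}2^{jn/p}$ (obtained by combining the uniform $L^{1}$-to-$L^{1}$ bound for $\tilde{\varphi}(2^{-j}D)$ with Bernstein's inequality applied to the spectrally localized function $\tilde{\varphi}(2^{-j}D)\psi$), reducing convergence to summability of $2^{j(n/p-s)}\cdot 2^{js}\|u_{j}\|_{L^{p}}$ over $j<0$. This follows from \eqref{negative-scaling} by an $l^{q'}$-$l^{q}$ H\"older inequality when $s<n/p$, and trivially from $l^{1}$-summability when $s=n/p$ and $q=1$. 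The membership $u\in\mathcal{S}_{h}'$ follows by a parallel analysis: the spectral identity $\dot{S}_{j}u=\sum_{k\leq j+N'}\chi(2^{-j}D)u_{k}$ (for a suitable $N'=N'(\mathcal{C}')$) together with $\|\chi(2^{-j}D)u_{k}\|_{L^{\infty}}\lesssim 2^{kn/p}\|u_{k}\|_{L^{p}}$ and the same summability shows $\|\dot{S}_{j}u\|_{L^{\infty}}\to 0$ as $j\to-\infty$.

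I anticipate the main delicate point to be the low-frequency estimate on $\tilde{\varphi}(2^{-j}D)\psi$: because the convolution operator $\tilde{\varphi}(2^{-j}D)$ is only \emph{uniformly} $L^{r}$-bounded for each fixed $r\in[1,\infty]$, its operator norm offers no decay in $j$, and the crucial gain $2^{jn/p}$ must be extracted indirectly, by exploiting that the output is spectrally supported on a set of volume $\sim 2^{jn}$ and invoking Bernstein's inequality. Once that estimate is in hand, matching it against the $\dot{B}_{p,q}^{s}$ hypothesis via H\"older is routine, as is assembling the constants for the main inequality into the clean $C_{\mathcal{C}'}^{1+|s|}$ form.
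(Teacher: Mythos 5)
Your proposal is correct and follows essentially the same route as the source the paper cites for this lemma (Bahouri--Chemin--Danchin, Lemmas 2.23--2.24): the main inequality via the spectral almost-orthogonality $\dot{\Delta}_{j}u_{k}=0$ for $|j-k|>N$ plus Young's inequality for discrete convolution, and the Note via Bernstein's inequality and summation of the low-frequency tail under \eqref{negative-scaling}. The paper itself offers no proof beyond the citation, and your argument fills it in accurately, including the genuinely delicate point of extracting the $2^{jn/p}$ gain for $j<0$.
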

A useful consequence of Lemma \ref{convergence-lemma} is that if $u\in\mathcal{S}'$ satisfies ${\|u\|}_{\dot{B}_{p,q}^{s}}<\infty$ for some $(s,p,q)$ satisfying \eqref{negative-scaling}, then $u\in\mathcal{S}_{h}'$.

If $u\in\dot{B}_{p_{1},1}^{0}$ and $v\in\dot{B}_{p_{2},1}^{0}$ with $\frac{1}{p_{1}}+\frac{1}{p_{2}}\leq1$, then the series $uv=\sum_{(j,j')\in\mathbb{Z}^{2}}\dot{\Delta}_{j}u\,\dot{\Delta}_{j'}v$ converges absolutely in $L^{\frac{p_{1}p_{2}}{p_{1}+p_{2}}}$, which justifies the Bony decomposition
\begin{equation*}
    uv = \dot{T}_{u}v+\dot{T}_{v}u+\dot{R}(u,v),
\end{equation*}
\begin{equation*}
    \dot{T}_{u}v = \sum_{j\in\mathbb{Z}}\dot{S}_{j-1}u\,\dot{\Delta}_{j}v,
\end{equation*}
\begin{equation*}
    \dot{R}(u,v) = \sum_{j\in\mathbb{Z}}\sum_{|\nu|\leq1}\dot{\Delta}_{j}u\,\dot{\Delta}_{j-\nu}v.
\end{equation*}
We will require the following estimates for the operators $\dot{T}$ and $\dot{R}$:
\begin{lemma}\label{paraproduct}
    (\cite{bahouri2011}, Theorem 2.47). Suppose that $s=s_{1}+s_{2}$, $p=\frac{p_{1}p_{2}}{p_{1}+p_{2}}$ and $q=\frac{q_{1}q_{2}}{q_{1}+q_{2}}$. Let $u,v\in\mathcal{S}'$, and assume that the series $\sum_{j\in\mathbb{Z}}\dot{S}_{j-1}u\,\dot{\Delta}_{j}v$ converges in $\mathcal{S}'$ to some $\dot{T}_{u}v\in\mathcal{S}'$. Then
    \begin{equation}\label{bony-estimate-1}
        {\|\dot{T}_{u}v\|}_{\dot{B}_{p,q}^{s}} \lesssim_{\varphi} C_{n}^{1+|s|}{\|u\|}_{L^{p_{1}}}{\|v\|}_{\dot{B}_{p_{2},q}^{s}},
    \end{equation}
    \begin{equation}\label{bony-estimate-2}
        {\|\dot{T}_{u}v\|}_{\dot{B}_{p,q}^{s}} \lesssim_{\varphi} \frac{C_{n}^{1+|s|}}{-s_{1}}{\|u\|}_{\dot{B}_{p_{1},q_{1}}^{s_{1}}}{\|v\|}_{\dot{B}_{p_{2},q_{2}}^{s_{2}}} \quad \text{if } s_{1}<0.
    \end{equation}
    Note: If $(s,p,q)$ satisfy \eqref{negative-scaling}, and the right hand side of either \eqref{bony-estimate-1} or \eqref{bony-estimate-2} is finite, then the hypothesis of convergence is satisfied, and $\dot{T}_{u}v\in\mathcal{S}_{h}'$.
\end{lemma}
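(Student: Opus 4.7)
The key structural input, already embedded in Lemma \ref{truncation-lemma}, is that for each $j\in\mathbb{Z}$ the dyadic block $\dot S_{j-1}u\,\dot\Delta_{j}v$ is spectrally supported in $2^{j}\widetilde{\mathcal{C}}$, since $\dot S_{j-1}u$ has Fourier support in $2^{j-1}B(0,4/3)\subseteq 2^{j}B(0,2/3)$ and $\dot\Delta_{j}v$ has Fourier support in $2^{j}\mathcal{C}$. This is exactly the setup required to apply the convergence lemma (Lemma \ref{convergence-lemma}) with annulus $\mathcal{C}'=\widetilde{\mathcal{C}}$. Consequently, both estimates will follow once I bound
\[
A_{s,p,q}(u,v):={\left\|j\mapsto 2^{js}{\|\dot S_{j-1}u\,\dot\Delta_{j}v\|}_{L^{p}}\right\|}_{l^{q}}
\]
in terms of the right-hand sides of \eqref{bony-estimate-1} and \eqref{bony-estimate-2}; the finiteness of $A_{s,p,q}(u,v)$ is then exactly the hypothesis needed for Lemma \ref{convergence-lemma} to give the desired bound on ${\|\dot T_{u}v\|}_{\dot B^{s}_{p,q}}$ together with the $C_{n}^{1+|s|}$ factor. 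In particular, once $A_{s,p,q}(u,v)<\infty$, the series $\sum_{j}\dot S_{j-1}u\,\dot\Delta_{j}v$ converges in $\mathcal{S}'$ in the cases \eqref{negative-scaling} claimed in the note.

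For \eqref{bony-estimate-1}, apply H\"older in space followed by \eqref{useful-inequality-1}:
\[
{\|\dot S_{j-1}u\,\dot\Delta_{j}v\|}_{L^{p}}\le{\|\dot S_{j-1}u\|}_{L^{p_{1}}}{\|\dot\Delta_{j}v\|}_{L^{p_{2}}}\lesssim_{\varphi}{\|u\|}_{L^{p_{1}}}{\|\dot\Delta_{j}v\|}_{L^{p_{2}}},
\]
which on multiplication by $2^{js}$ and taking the $l^{q}$ norm gives
$A_{s,p,q}(u,v)\lesssim_{\varphi}{\|u\|}_{L^{p_{1}}}{\|v\|}_{\dot B^{s}_{p_{2},q}}$.

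For \eqref{bony-estimate-2}, the key idea is to convert the low-frequency cutoff $\dot S_{j-1}$ into a convolution in the $j$-variable, exploiting $s_{1}<0$ to sum. Writing $\dot S_{j-1}u=\sum_{j'\le j-2}\dot\Delta_{j'}u$ and setting $a_{j'}:=2^{j's_{1}}{\|\dot\Delta_{j'}u\|}_{L^{p_{1}}}$ and $b_{j}:=2^{js_{2}}{\|\dot\Delta_{j}v\|}_{L^{p_{2}}}$, one computes
\[
2^{js}{\|\dot S_{j-1}u\,\dot\Delta_{j}v\|}_{L^{p}}\lesssim_{\varphi} b_{j}\sum_{j'\le j-2}2^{(j-j')s_{1}}a_{j'}=b_{j}\,(c*a)_{j},
\]
where $c_{k}:=2^{ks_{1}}\mathbf{1}_{k\ge 2}$. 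Since $s_{1}<0$, $\|c\|_{l^{1}}=\sum_{k\ge 2}2^{ks_{1}}=\frac{2^{2s_{1}}}{1-2^{s_{1}}}\lesssim\frac{1}{-s_{1}}$. Young's inequality on $\mathbb Z$ gives $\|c*a\|_{l^{q_{1}}}\le\|c\|_{l^{1}}\|a\|_{l^{q_{1}}}$, and since $1/q=1/q_{1}+1/q_{2}$, H\"older in $j$ yields
\[
A_{s,p,q}(u,v)\lesssim_{\varphi}\|c\|_{l^{1}}\|a\|_{l^{q_{1}}}\|b\|_{l^{q_{2}}}\lesssim\frac{1}{-s_{1}}{\|u\|}_{\dot B^{s_{1}}_{p_{1},q_{1}}}{\|v\|}_{\dot B^{s_{2}}_{p_{2},q_{2}}}.
\]
Invoking Lemma \ref{convergence-lemma} with $\mathcal{C}'=\widetilde{\mathcal{C}}$ (noting $C_{\widetilde{\mathcal{C}}}$ is an absolute constant depending only on $n$) converts these bounds on $A_{s,p,q}$ into the announced bounds on ${\|\dot T_{u}v\|}_{\dot B^{s}_{p,q}}$ with the factor $C_{n}^{1+|s|}$, and supplies the convergence and $\mathcal{S}_{h}'$-membership under \eqref{negative-scaling}.

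The only real subtlety is the extraction of the $\frac{1}{-s_{1}}$ factor in \eqref{bony-estimate-2}: one must recognise the low-frequency sum $\dot S_{j-1}$ as producing a discrete convolution with a geometric sequence whose $l^{1}$-norm blows up like $(-s_{1})^{-1}$ as $s_{1}\nearrow 0$, and handle the split of $l^{q}$-exponents via Young plus H\"older rather than, say, raw Minkowski. The first estimate is essentially immediate from H\"older and the boundedness \eqref{useful-inequality-1} of $\dot S_{j-1}$ on $L^{p_{1}}$.
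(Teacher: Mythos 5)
The paper offers no proof of this lemma — it is quoted verbatim from \cite{bahouri2011}, Theorem 2.47 — and your argument is precisely the standard textbook proof of that result (spectral localisation of $\dot S_{j-1}u\,\dot\Delta_{j}v$ in $2^{j}\widetilde{\mathcal{C}}$, Lemma \ref{convergence-lemma}, H\"older, and for \eqref{bony-estimate-2} the discrete convolution with the geometric sequence of $l^{1}$-norm $\lesssim(-s_{1})^{-1}$), so it is correct and there is nothing substantive to contrast. The one step worth making explicit is that the identity $\dot S_{j-1}u=\sum_{j'\le j-2}\dot\Delta_{j'}u$ underlying \eqref{bony-estimate-2} requires $u\in\mathcal{S}_{h}'$; this is available here because $s_{1}<0\leq\frac{n}{p_{1}}$ places $(s_{1},p_{1},q_{1})$ in the regime \eqref{negative-scaling}, so finiteness of ${\|u\|}_{\dot{B}_{p_{1},q_{1}}^{s_{1}}}$ together with the remark after Lemma \ref{convergence-lemma} gives $u\in\mathcal{S}_{h}'$.
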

\begin{lemma}\label{remainder}
    (\cite{bahouri2011}, Theorem 2.52). Suppose that $s=s_{1}+s_{2}$, $p=\frac{p_{1}p_{2}}{p_{1}+p_{2}}$ and $q=\frac{q_{1}q_{2}}{q_{1}+q_{2}}$. Let $u,v\in\mathcal{S}'$, and assume that the series $\sum_{j\in\mathbb{Z}}\sum_{|\nu|\leq1}\dot{\Delta}_{j}u\,\dot{\Delta}_{j-\nu}v$ converges in $\mathcal{S}'$ to some $\dot{R}(u,v)\in\mathcal{S}'$. Then
    \begin{equation}\label{bony-estimate-3}
        {\|\dot{R}(u,v)\|}_{\dot{B}_{p,q}^{s}} \lesssim_{\varphi} \frac{C_{n}^{1+|s|}}{s}{\|u\|}_{\dot{B}_{p_{1},q_{1}}^{s_{1}}}{\|v\|}_{\dot{B}_{p_{2},q_{2}}^{s_{2}}} \quad \text{if } s>0,
    \end{equation}
    \begin{equation}\label{bony-estimate-4}
        {\|\dot{R}(u,v)\|}_{\dot{B}_{p,\infty}^{s}} \lesssim_{\varphi} C_{n}^{1+|s|}{\|u\|}_{\dot{B}_{p_{1},q_{1}}^{s_{1}}}{\|v\|}_{\dot{B}_{p_{2},q_{2}}^{s_{2}}} \quad \text{if }q=1\text{ and } s\geq0.
    \end{equation}
    Note: If $(s,p,q)$ satisfy \eqref{negative-scaling} and the right hand side of \eqref{bony-estimate-3} is finite, or if $(s,p,\infty)$ satisfy \eqref{negative-scaling} and the right hand side of \eqref{bony-estimate-4} is finite, then the hypothesis of convergence is satisfied, and $\dot{R}(u,v)\in\mathcal{S}_{h}'$.
\end{lemma}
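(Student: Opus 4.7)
The key structural observation driving the proof is that, in contrast to Lemma \ref{convergence-lemma}, each term $R_j^\nu := \dot{\Delta}_j u\,\dot{\Delta}_{j-\nu}v$ for $|\nu|\leq 1$ has Fourier support in $2^j\mathcal{C}+2^{j-\nu}\mathcal{C}\subseteq 2^j\mathcal{B}$, where $\mathcal{B}$ is a fixed ball centred at the origin (not an annulus). This is precisely what forces the condition $s>0$ in \eqref{bony-estimate-3} and makes the boundary case $s\geq 0$ accessible only via the weaker output norm $\dot{B}_{p,\infty}^s$ in \eqref{bony-estimate-4}.

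The plan is to estimate $\dot{\Delta}_k\dot{R}(u,v)=\sum_{\nu,j}\dot{\Delta}_k R_j^\nu$ for each $k\in\mathbb{Z}$. Since $\dot{\Delta}_k R_j^\nu=0$ whenever $2^k\mathcal{C}\cap 2^j\mathcal{B}=\emptyset$, there is a fixed constant $N_0$ such that only indices $j\geq k-N_0$ contribute. Combining this with Lemma \ref{useful-inequalities} and Hölder's inequality in $L^p$ yields
\[
2^{ks}{\|\dot{\Delta}_k \dot{R}(u,v)\|}_{L^p} \lesssim_\varphi \sum_{|\nu|\leq 1}\sum_{j\geq k-N_0} 2^{(k-j)s}\,a_j^\nu, \qquad a_j^\nu := 2^{js_1}{\|\dot{\Delta}_j u\|}_{L^{p_1}}\cdot 2^{js_2}{\|\dot{\Delta}_{j-\nu}v\|}_{L^{p_2}}.
\]
Hölder's inequality in $j$ with $1/q=1/q_1+1/q_2$ gives ${\|a_j^\nu\|}_{l^q_j}\lesssim {\|u\|}_{\dot{B}_{p_1,q_1}^{s_1}}{\|v\|}_{\dot{B}_{p_2,q_2}^{s_2}}$. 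For \eqref{bony-estimate-3}, apply Young's inequality $l^1\ast l^q\hookrightarrow l^q$ to the convolution in $k-j$; the $l^1$-norm of $\{2^{ms}\mathbf{1}_{m\leq N_0}\}_m$ equals $2^{N_0 s}/(1-2^{-s})$, contributing the factor $C_n^{1+|s|}/s$ since $s>0$. For \eqref{bony-estimate-4}, sending the output to $l^\infty_k$ costs only $\sup_{m\leq N_0}2^{ms}=2^{N_0 s}<\infty$ for all $s\geq 0$; but bounding the residual sum $\sum_j a_j^\nu$ uniformly in $k$ requires $\{a_j^\nu\}\in l^1_j$, which via Hölder forces $1/q_1+1/q_2\geq 1$, hence $q=1$.

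The main obstacle will be the note on convergence in $\mathcal{S}'$ and the inclusion $\dot{R}(u,v)\in\mathcal{S}_h'$ under the assumption \eqref{negative-scaling}. Here I would follow the strategy of Lemma \ref{convergence-lemma}: the Besov bound above controls the high-frequency tail of the partial sums $\sum_{|j|\leq M}\sum_\nu R_j^\nu$ in $\mathcal{S}'$, while the low-frequency ($j\to-\infty$) tail is handled by testing against a Schwartz function and using the Bernstein-type inequality from Lemma \ref{useful-inequalities} to trade regularity for integrability until the scaling condition \eqref{negative-scaling} makes the residual geometric series summable. The same argument, applied to $\dot{S}_k\dot{R}(u,v)$ in place of the full partial sums, gives ${\|\dot{S}_k\dot{R}(u,v)\|}_{L^\infty}\to 0$ as $k\to-\infty$, establishing $\dot{R}(u,v)\in\mathcal{S}_h'$.
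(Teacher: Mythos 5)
The paper does not prove this lemma; it is quoted directly from Bahouri--Chemin--Danchin (Theorem 2.52), so there is no in-paper argument to compare against. Your proof --- the ball-shaped frequency support of $\dot{\Delta}_{j}u\,\dot{\Delta}_{j-\nu}v$ forcing $j\geq k-N_{0}$, H\"older in $j$ with $\tfrac1q=\tfrac1{q_1}+\tfrac1{q_2}$, Young's convolution inequality producing the factor $2^{N_0s}/(1-2^{-s})\lesssim C_n^{1+|s|}/s$ for $s>0$, the $l^\infty$-output/$l^1$-input endpoint for $s\geq0$, $q=1$, and the Bernstein/low-frequency argument for the convergence note --- is precisely the standard proof given in that reference, and it is correct.
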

\section{Commutator estimates}\label{commutator-estimates}
In this section, we will adapt the proof of \cite[Lemma 2.100]{bahouri2011} to prove the commutator estimates in the following proposition, which will be crucial to the proof of Theorem \ref{main-theorem}:
\begin{proposition}\label{commutator-prop}
    For $v,f\in\cap_{r\in[0,\infty)}\dot{H}^{r}$ and $j\in\mathbb{Z}$, define\footnote{We apply the summation convention to the index $k$.}
    \begin{equation*}
        R_{j}=[v\cdot\nabla,\dot{\Delta}_{j}]f=[v_{k},\dot{\Delta}_{j}]\nabla_{k}f
    \end{equation*}
    where $[\,\cdot\,,\,\cdot\,]$ denotes the commutator $[A,B]=AB-BA$, and suppose that $s=s_{1}+s_{2}$, $p=\frac{p_{1}p_{2}}{p_{1}+p_{2}}$ and $q=\frac{q_{1}q_{2}}{q_{1}+q_{2}}$ ($s_j\in \mathbb{R}$ and $p_j,q_j \in [1,\infty]$).
    Then we have the decomposition $R_{j}=\sum_{i=1}^{6}R_{j}^{i}$ with\footnote{Note that $R_{j}^{6}=0$ whenever $\nabla\cdot v=0$.}
    \begin{equation*}
        R_{j}^{1} = [\dot{T}_{v_{k}},\dot{\Delta}_{j}]\nabla_{k}f, \quad R_{j}^{2} = \dot{T}_{\nabla_{k}\dot{\Delta}_{j}f}v_{k}, \quad R_{j}^{3} = -\dot{\Delta}_{j}\dot{T}_{\nabla_{k}f}v_{k},
    \end{equation*}
    \begin{equation*}
        R_{j}^{4} = \dot{R}(v_{k},\nabla_{k}\dot{\Delta}_{j}f), \quad R_{j}^{5} = -\nabla_{k}\dot{\Delta}_{j}\dot{R}(v_{k},f), \quad R_{j}^{6} = \dot{\Delta}_{j}\dot{R}(\nabla_{k}v_{k},f)
    \end{equation*}
  which satisfy the estimates
    \begin{equation*}
    \begin{aligned}
        {\left\|j\mapsto2^{js}{\|R_{j}^{1}\|}_{L^{p}}\right\|}_{l^{q}} &\lesssim {\|\nabla v\|}_{L^{p_{1}}}{\|f\|}_{\dot{B}_{p_{2},q}^{s}}, \\
        {\left\|j\mapsto2^{js}{\|R_{j}^{1}\|}_{L^{p}}\right\|}_{l^{q}} &\lesssim {\|\nabla v\|}_{\dot{B}_{p_{1},q_{1}}^{s_{1}}}{\|f\|}_{\dot{B}_{p_{2},q_{2}}^{s_{2}}} \quad \text{if }s_{1}<0, \\
        {\left\|j\mapsto2^{js}{\|R_{j}^{2}\|}_{L^{p}}\right\|}_{l^{q}} &\lesssim {\|\nabla v\|}_{\dot{B}_{p_{1},q_{1}}^{s_{1}}}{\|f\|}_{\dot{B}_{p_{2},q_{2}}^{s_{2}}} \quad \text{if }s_{1}>-1, \\
        {\left\|j\mapsto2^{js}{\|R_{j}^{3}\|}_{L^{p}}\right\|}_{l^{q}} &\lesssim {\|\nabla v\|}_{\dot{B}_{p_{1},q_{1}}^{s_{1}}}{\|f\|}_{\dot{B}_{p_{2},q_{2}}^{s_{2}}} \quad \text{if }s_{2}<1, \\
        {\left\|j\mapsto2^{js}{\|R_{j}^{4}\|}_{L^{p}}\right\|}_{l^{q}} &\lesssim {\|\nabla v\|}_{\dot{B}_{p_{1},q_{1}}^{s_{1}}}{\|f\|}_{\dot{B}_{p_{2},q_{2}}^{s_{2}}}, \\
        {\left\|j\mapsto2^{js}{\|R_{j}^{5}\|}_{L^{p}}\right\|}_{l^{q}} &\lesssim {\|\nabla v\|}_{\dot{B}_{p_{1},q_{1}}^{s_{1}}}{\|f\|}_{\dot{B}_{p_{2},q_{2}}^{s_{2}}} \quad \text{if }s>-1, \\
        {\left\|j\mapsto2^{js}{\|R_{j}^{6}\|}_{L^{p}}\right\|}_{l^{q}} &\lesssim {\|\nabla v\|}_{\dot{B}_{p_{1},q_{1}}^{s_{1}}}{\|f\|}_{\dot{B}_{p_{2},q_{2}}^{s_{2}}} \quad \text{if }s>0,
    \end{aligned}
    \end{equation*}
    where the implied constants depend on $\varphi,s_{1},s_{2},p_{1},p_{2}$.
\end{proposition}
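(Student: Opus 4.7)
The strategy is to first derive the six-term decomposition algebraically and then estimate each $R_j^i$; the main obstacle is $R_j^1$, which requires a kernel-based commutator argument.

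For the decomposition, I would apply the Bony decomposition both to $v_k\cdot(\dot{\Delta}_j\nabla_k f)$ and to $\dot{\Delta}_j(v_k\nabla_k f)$, and subtract. The paraproduct contributions collapse immediately to $R_j^1$, $R_j^2$ and $R_j^3$. For the remainder contributions, I would use the product-rule-type identity $\dot{R}(v_k,\nabla_k f)=\nabla_k\dot{R}(v_k,f)-\dot{R}(\nabla_k v_k,f)$ (which holds because $\nabla_k$ commutes with every $\dot{\Delta}_{j''}$ in the defining sum) to rewrite $-\dot{\Delta}_j\dot{R}(v_k,\nabla_k f)=R_j^5+R_j^6$, with $\dot{R}(v_k,\dot{\Delta}_j\nabla_k f)=R_j^4$ coming directly. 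The smoothness assumption $v,f\in\bigcap_r\dot{H}^r$ guarantees absolute convergence so that these rearrangements are legitimate.

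The key estimate is for $R_j^1$. Adapting \cite[Lemma 2.100]{bahouri2011}, Lemma \ref{truncation-lemma} reduces this to $R_j^1=\sum_{|j-j'|\leq 4}[\dot{S}_{j'-1}v_k,\dot{\Delta}_j]\dot{\Delta}_{j'}\nabla_k f$. Writing $\dot{\Delta}_j$ as convolution with $h_j=2^{jn}h(2^j\cdot)$ for $h=\mathcal{F}^{-1}\varphi\in\mathcal{S}$, and Taylor-expanding
\[
\dot{S}_{j'-1}v_k(y)-\dot{S}_{j'-1}v_k(x)=(y-x)_l\int_0^1\dot{S}_{j'-1}\nabla_l v_k(x+t(y-x))\,\mathrm{d}t,
\]
the commutator becomes an integral operator with Schwartz convolution kernel $2^{-j}\tilde{h}_j$ (where $\tilde{h}(z)=-z_l h(z)$), gaining a factor $2^{-j}$ from $|y-x|$. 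Combined with Bernstein's $2^{j'}$ on $\nabla_k\dot{\Delta}_{j'}f$, H\"older and Young yield $\|R_j^1\|_{L^p}\lesssim\sum_{|j-j'|\leq 4}2^{j'-j}\|\dot{S}_{j'-1}\nabla v\|_{L^{p_1}}\|\dot{\Delta}_{j'}f\|_{L^{p_2}}$. The first stated estimate then follows from $\|\dot{S}_{j'-1}\nabla v\|_{L^{p_1}}\lesssim\|\nabla v\|_{L^{p_1}}$ and Young's convolution inequality in $j$. For the second estimate, I would decompose $\dot{S}_{j'-1}\nabla v$ into its Littlewood-Paley blocks and apply H\"older in the block index with the Besov weight $2^{m s_1}$; the resulting geometric series converges exactly when $s_1<0$, and Young's convolution inequality in $j$ (using $1/q=1/q_1+1/q_2$ to pair $\|\nabla v\|_{\dot{B}_{p_1,q_1}^{s_1}}$ with $\|f\|_{\dot{B}_{p_2,q_2}^{s_2}}$) finishes the job.

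The remaining five estimates all follow a common pattern: apply H\"older and Bernstein to a single dyadic product, identify the range of summation allowed by spectral localization, and apply Young's convolution inequality in $j$ against a geometric weight in the relative dyadic index. Convergence of this weight dictates the stated hypothesis in each case. For $R_j^2$, the spectral support of $\nabla_k\dot{\Delta}_j f$ restricts the paraproduct sum to $j''\geq j-C$, giving a weight summable iff $s_1>-1$. For $R_j^3$, Lemma \ref{truncation-lemma} restricts $|j-j''|\leq 4$, and expanding $\dot{S}_{j''-1}\nabla_k f$ in Littlewood-Paley blocks yields a weight summable iff $s_2<1$. For $R_j^4$, the bilinear remainder structure forces $|j''-j|\leq 2$ automatically and no hypothesis is required. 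For $R_j^5$ and $R_j^6$, the outer $\dot{\Delta}_j$ applied to the remainder again restricts $j''\geq j-C$, giving weights summable iff $s>-1$ and $s>0$ respectively.
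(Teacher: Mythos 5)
Your proposal is correct and follows essentially the same route as the paper: the same Bony-decomposition bookkeeping for $R_j^1,\dots,R_j^6$, the same kernel/Taylor-expansion commutator bound for $R_j^1$ (which the paper simply cites as \cite[Lemma 2.97]{bahouri2011} rather than re-deriving), and the same Bernstein-plus-Young-in-$j$ estimates with the same geometric-weight conditions $s_1<0$, $s_1>-1$, $s_2<1$, $s>-1$, $s>0$ for the remaining terms. The only cosmetic difference is that you re-prove the commutator kernel lemma and the remainder estimates for $R_j^5$, $R_j^6$ directly, where the paper invokes the cited lemmas.
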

\begin{remark}
    Write $A_{1},A_{2},A_{3},A_{5},A_{6}$ to denote the constraints
    \begin{equation*}
        A_{1}\text{ : }s_{1}\leq0, \quad A_{2}\text{ : }s_{1}\geq-1, \quad A_{3}\text{ : }s_{2}\leq1, \quad A_{5}\text{ : }s\geq-1, \quad A_{6}\text{ : }s\geq0.
    \end{equation*}
    For $i=1,2,3,5,6$, a simple modification of our arguments yields the estimates
    \begin{equation*}
        {\left\|j\mapsto2^{js}{\|R_{j}^{i}\|}_{L^{p}}\right\|}_{l^{\infty}} \lesssim {\|\nabla v\|}_{\dot{B}_{p_{1},q_{1}}^{s_{1}}}{\|f\|}_{\dot{B}_{p_{2},q_{2}}^{s_{2}}} \quad \text{if }q=1\text{ and }A_{i}\text{ holds},
    \end{equation*}
    but we will not need these estimates when proving Theorem \ref{main-theorem}.
\end{remark}
As in \cite[Lemma 2.100]{bahouri2011}, to prove Proposition \ref{commutator-prop} we will rely on the following lemma:

\begin{lemma}\label{commutator-lemma}
    (\cite{bahouri2011}, Lemma 2.97). Let $\theta\in C^{1}(\mathbb{R}^{n})$ be such that $\int_{\mathbb{R}^{n}}(1+|\xi|)|\mathcal{F}\theta(\xi)|\,\mathrm{d}\xi<\infty.$ Then for any $a\in C^{1}(\mathbb{R}^{n})$ with $\nabla a\in L^{p}(\mathbb{R}^{n})$, any $b\in L^{q}(\mathbb{R}^{n})$, and any $\lambda\in(0,\infty)$, we have
    \begin{equation*}
        {\|[\theta(\lambda^{-1}D),a]b\|}_{L^{\frac{pq}{p+q}}(\mathbb{R}^{n})} \lesssim_{\theta} \lambda^{-1}{\|\nabla a\|}_{L^{p}(\mathbb{R}^{n})}{\|b\|}_{L^{q}(\mathbb{R}^{n})}.
    \end{equation*}
\end{lemma}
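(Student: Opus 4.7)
The plan is to represent the operator $\theta(\lambda^{-1}D)$ as a convolution and then extract a factor of $\nabla a$ from the commutator structure via the fundamental theorem of calculus. First I introduce the kernel $h:=\mathcal{F}^{-1}\theta$, which is well-defined because the hypothesis $\int(1+|\xi|)|\mathcal{F}\theta(\xi)|\,d\xi<\infty$ forces $\mathcal{F}\theta\in L^{1}$. By the Fourier inversion formula, $h(-\xi)$ is a constant multiple of $\mathcal{F}\theta(\xi)$, so the hypothesis translates cleanly into $\int(1+|w|)|h(w)|\,dw<\infty$; in particular, both $h$ and $\widetilde{h}(w):=w\,h(w)$ lie in $L^{1}$. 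Setting $h_{\lambda}(x):=\lambda^{n}h(\lambda x)$, the multiplier $\theta(\lambda^{-1}D)$ is convolution with $h_{\lambda}$, which yields
\begin{equation*}
    [\theta(\lambda^{-1}D),a]b(x)=\int_{\mathbb{R}^{n}}h_{\lambda}(x-y)\bigl(a(y)-a(x)\bigr)b(y)\,dy.
\end{equation*}

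The crucial algebraic step is to substitute the identity
$a(y)-a(x)=-\int_{0}^{1}(x-y)\cdot\nabla a(x+t(y-x))\,dt$,
valid since $a\in C^{1}$, and then change variable $w=x-y$. This produces
\begin{equation*}
    [\theta(\lambda^{-1}D),a]b(x)=-\lambda^{-1}\int_{0}^{1}\!\!\int_{\mathbb{R}^{n}}\lambda^{n}\widetilde{h}(\lambda w)\cdot\nabla a(x-tw)\,b(x-w)\,dw\,dt,
\end{equation*}
where the factor $(x-y)=w$ has been absorbed into $\widetilde{h}$ via the scaling identity $w\,h_{\lambda}(w)=\lambda^{-1}(\widetilde{h})_{\lambda}(w)$; this is precisely where the $\lambda^{-1}$ gain is manufactured.

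The remainder is a routine triangle/Hölder estimate. Taking the $L^{pq/(p+q)}$-norm in $x$, pushing it inside the $dw\,dt$ integration by Minkowski's inequality, and using translation invariance together with Hölder's inequality ($\frac{1}{p}+\frac{1}{q}=\frac{p+q}{pq}$) to control $\nabla a(\cdot-tw)\,b(\cdot-w)$ by $\|\nabla a\|_{L^{p}}\|b\|_{L^{q}}$, the integrand is majorised by $|\lambda^{n}\widetilde{h}(\lambda w)|\,\|\nabla a\|_{L^{p}}\|b\|_{L^{q}}$. After a dilation change of variable the remaining $w$-integral equals $\|\widetilde{h}\|_{L^{1}}$, which is $\lesssim_{\theta}1$ by hypothesis, giving the claimed estimate.

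The only real obstacle is rigorously justifying the kernel representation, since $a$ is merely $C^{1}$ with $\nabla a\in L^{p}$ and need not be integrable, so a priori $h_{\lambda}\ast(ab)$ is not automatically well-defined. I expect to handle this by a standard density/truncation argument — for instance, replacing $a$ by $a-a(x_{0})$ on expanding compact sets, or first establishing the identity and estimate for $b$ in a dense subclass of $L^{q}$ (where everything converges absolutely) and extending by continuity — but no new ideas beyond those above are required.
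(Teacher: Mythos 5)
Your argument is correct and is essentially the proof given in the cited reference (Bahouri--Chemin--Danchin, Lemma 2.97), which the paper itself does not reproduce: kernel representation of the multiplier, first-order Taylor expansion of $a$, absorption of the factor $w$ into $\widetilde{h}(w)=w\,h(w)$ to manufacture the $\lambda^{-1}$, then Minkowski and H\"older. The convergence issue you flag at the end is handled most simply by noting that your final majorant already shows the defining integral converges absolutely for a.e.\ $x$, so Fubini--Tonelli justifies every step without a separate truncation argument.
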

\begin{remark}\label{commutator-remark}
    If we take $\theta=\varphi$ and $\lambda=2^{j}$, then Lemma \ref{commutator-lemma} yields the estimate
    \begin{equation*}
        {\|[\dot{\Delta}_{j},a]b\|}_{L^{\frac{pq}{p+q}}(\mathbb{R}^{n})} \lesssim_{\varphi} 2^{-j}{\|\nabla a\|}_{L^{p}(\mathbb{R}^{n})}{\|b\|}_{L^{q}(\mathbb{R}^{n})}.
    \end{equation*}
\end{remark}
\begin{proof}[Proof of Proposition \ref{commutator-prop}]
    The decomposition $R_{j}=\sum_{i=1}^{6}R_{j}^{i}$ comes from applying the Bony decomposition; the very strong regularity assumption $v,f\in\cap_{r\in[0,\infty)}\dot{H}^{r}$ is more than sufficient to address any convergence issues that may arise. In the following computations, we write ${(c_{j})}_{j\in\mathbb{Z}}$ to denote a sequence satisfying ${\|(c_{j})\|}_{l^{q}}\leq1$, and the constants implied by the notation $\lesssim$ depend on $\varphi,s_{1},s_{2},p_{1},p_{2}$.

    {\bf Bounds for $\boldsymbol{2^{js}{\|R_{j}^{1}\|}_{L^{p}}}$.} By Lemma \ref{truncation-lemma} we have
    \begin{equation*}
        R_{j}^{1} = \sum_{|j-j'|\leq4}[\dot{S}_{j'-1}v_{k},\dot{\Delta}_{j}]\nabla_{k}\dot{\Delta}_{j'}f,
    \end{equation*}
    so by Remark \ref{commutator-remark} and Lemma \ref{useful-inequalities} we have
    \begin{equation}\label{R1-estimate}
        2^{js}{\|R_{j}^{1}\|}_{L^{p}} \lesssim \sum_{|j-j'|\leq4}2^{js}2^{j'-j}{\|\nabla\dot{S}_{j'-1}v\|}_{L^{p_{1}}}{\|\dot{\Delta}_{j'}f\|}_{L^{p_{2}}}.
    \end{equation}
    By \eqref{useful-inequality-1}, we deduce that
    \begin{equation*}
        2^{js}{\|R_{j}^{1}\|}_{L^{p}} \lesssim \sum_{|j-j'|\leq4}2^{js}2^{j'-j}{\|\nabla v\|}_{L^{p_{1}}}{\|\dot{\Delta}_{j'}f\|}_{L^{p_{2}}} \lesssim c_{j}{\|\nabla v\|}_{L^{p_{1}}}{\|f\|}_{\dot{B}_{p_{2},q}^{s}}.
    \end{equation*}
    On the other hand, if $s_{1}<0$ then \eqref{R1-estimate} implies that
    \begin{equation*}
    \begin{aligned}
        2^{js}{\|R_{j}^{1}\|}_{L^{p}} &\lesssim \sum_{\substack{|j-j'|\leq4 \\ j''\leq j'-2}}2^{js}2^{j'-j}{\|\nabla\dot{\Delta}_{j''}v\|}_{L^{p_{1}}}{\|\dot{\Delta}_{j'}f\|}_{L^{p_{2}}} \\
        &\lesssim \sum_{\substack{|j-j'|\leq4 \\ j''\leq j'-2}} 2^{(j-j'')s_{1}}2^{j''s_{1}}{\|\nabla\dot{\Delta}_{j''}v\|}_{L^{p_{1}}}2^{j's_{2}}{\|\dot{\Delta}_{j'}f\|}_{L^{p_{2}}} \\
        &\lesssim c_{j}{\|\nabla v\|}_{\dot{B}_{p_{1},q_{1}}^{s_{1}}}{\|f\|}_{\dot{B}_{p_{2},q_{2}}^{s_{2}}},
    \end{aligned}
    \end{equation*}
    where we used the inequality ${\|(\alpha*\beta)\gamma\|}_{l^{q}}\leq{\|\alpha*\beta\|}_{l^{q_{1}}}{\|\gamma\|}_{l^{q_{2}}}\leq{\|\alpha\|}_{l^{1}}{\|\beta\|}_{l^{q_{1}}}{\|\gamma\|}_{l^{q_{2}}}$ in the last line.

    {\bf Bounds for $\boldsymbol{2^{js}{\|R_{j}^{2}\|}_{L^{p}}}$.} By Lemma \ref{truncation-lemma} we have
    \begin{equation*}
        R_{j}^{2} = \sum_{j'\geq j+1}\dot{S}_{j'-1}\nabla_{k}\dot{\Delta}_{j}f\,\dot{\Delta}_{j'}v_{k}.
    \end{equation*}
    so by Lemma \ref{useful-inequalities} we have
    \begin{equation*}
        2^{js}{\|R_{j}^{2}\|}_{L^{p}} \lesssim \sum_{j'\geq j+1}2^{js}2^{j-j'}{\|\nabla\dot{\Delta}_{j'}v\|}_{L^{p_{1}}}{\|\dot{\Delta}_{j}f\|}_{L^{p_{2}}}.
    \end{equation*}
    If $s_{1}>-1$, then we deduce that
    \begin{equation*}
    \begin{aligned}
        2^{js}{\|R_{j}^{2}\|}_{L^{p}} &\lesssim \sum_{j'\geq j+1}2^{(j-j')(s_{1}+1)}2^{j's_{1}}{\|\nabla\dot{\Delta}_{j'}v\|}_{L^{p_{1}}}2^{js_{2}}{\|\dot{\Delta}_{j}f\|}_{L^{p_{2}}} \\
        &\lesssim c_{j}{\|\nabla v\|}_{\dot{B}_{p_{1},q_{1}}^{s_{1}}}{\|f\|}_{\dot{B}_{p_{2},q_{2}}^{s_{2}}},
    \end{aligned}
    \end{equation*}
    where we used the inequality ${\|(\alpha*\beta)\gamma\|}_{l^{q}}\leq{\|\alpha*\beta\|}_{l^{q_{1}}}{\|\gamma\|}_{l^{q_{2}}}\leq{\|\alpha\|}_{l^{1}}{\|\beta\|}_{l^{q_{1}}}{\|\gamma\|}_{l^{q_{2}}}$ in the last line.

    {\bf Bounds for $\boldsymbol{2^{js}{\|R_{j}^{3}\|}_{L^{p}}}$.} By Lemma \ref{truncation-lemma} we have
    \begin{equation*}
    \begin{aligned}
        R_{j}^{3} &= -\sum_{|j-j'|\leq4}\dot{\Delta}_{j}\left(\dot{S}_{j'-1}\nabla_{k}f\,\dot{\Delta}_{j'}v_{k}\right) \\
        &= -\sum_{\substack{|j-j'|\leq4 \\ j''\leq j'-2}}\dot{\Delta}_{j}\left(\dot{\Delta}_{j''}\nabla_{k}f\,\dot{\Delta}_{j'}v_{k}\right),
    \end{aligned}
    \end{equation*}
    so by Lemma \ref{useful-inequalities} we have
    \begin{equation*}
        2^{js}{\|R_{j}^{3}\|}_{L^{p}} \lesssim \sum_{\substack{|j-j'|\leq4 \\ j''\leq j'-2}}2^{js}2^{j''-j'}{\|\nabla\dot{\Delta}_{j'}v\|}_{L^{p_{1}}}{\|\dot{\Delta}_{j''}f\|}_{L^{p_{2}}}.
    \end{equation*}
    If $s_{2}<1$, then we deduce that
    \begin{equation*}
    \begin{aligned}
        2^{js}{\|R_{j}^{3}\|}_{L^{p}} &\lesssim \sum_{\substack{|j-j'|\leq4 \\ j''\leq j'-2}}2^{j's_{1}}{\|\nabla\dot{\Delta}_{j'}v\|}_{L^{p_{1}}}2^{(j'-j'')(s_{2}-1)}2^{j''s_{2}}{\|\dot{\Delta}_{j''}f\|}_{L^{p_{2}}} \\
        &\lesssim c_{j}{\|\nabla v\|}_{\dot{B}_{p_{1},q_{1}}^{s_{1}}}{\|f\|}_{\dot{B}_{p_{2},q_{2}}^{s_{2}}},
    \end{aligned}
    \end{equation*}
    where we used the inequality ${\|\alpha(\beta*\gamma)\|}_{l^{q}}\leq{\|\alpha\|}_{l^{q_{1}}}{\|\beta*\gamma\|}_{l^{q_{2}}}\leq{\|\alpha\|}_{l^{q_{1}}}{\|\beta\|}_{l^{1}}{\|\gamma\|}_{l^{q_{2}}}$ in the last line.

    {\bf Bounds for $\boldsymbol{2^{js}{\|R_{j}^{4}\|}_{L^{p}}}$.} Defining $\widetilde{\Delta}_{j'}=\sum_{|\nu|\leq1}\dot{\Delta}_{j'-\nu}$, by Lemma \ref{truncation-lemma} we have
    \begin{equation*}
        R_{j}^{4} = \sum_{|j-j'|\leq2}\dot{\Delta}_{j'}v_{k}\,\nabla_{k}\dot{\Delta}_{j}\widetilde{\Delta}_{j'}f,
    \end{equation*}
    so by Lemma \ref{useful-inequalities} and the inequality ${\|\alpha\beta\|}_{l^{q}}\leq{\|\alpha\|}_{l^{q_{1}}}{\|\beta\|}_{l^{q_{2}}}$ we have
    \begin{equation*}
        2^{js}{\|R_{j}^{4}\|}_{L^{p}} \lesssim c_{j}{\|\nabla v\|}_{\dot{B}_{p_{1},q_{1}}^{s_{1}}}{\|f\|}_{\dot{B}_{p_{2},q_{2}}^{s_{2}}}.
    \end{equation*}

    {\bf Bounds for $\boldsymbol{2^{js}{\|R_{j}^{5}\|}_{L^{p}}}$ and $\boldsymbol{2^{js}{\|R_{j}^{6}\|}_{L^{p}}}$.} By Lemma \ref{useful-inequalities} and \eqref{bony-estimate-3}, we have
    \begin{equation*}
    \begin{aligned}
        {\left\|j\mapsto2^{js}{\|R_{j}^{5}\|}_{L^{p}}\right\|}_{l^{q}} &\lesssim {\|\nabla v\|}_{\dot{B}_{p_{1},q_{1}}^{s_{1}}}{\|f\|}_{\dot{B}_{p_{2},q_{2}}^{s_{2}}} \quad \text{if }s>-1, \\
        {\left\|j\mapsto2^{js}{\|R_{j}^{6}\|}_{L^{p}}\right\|}_{l^{q}} &\lesssim {\|\nabla v\|}_{\dot{B}_{p_{1},q_{1}}^{s_{1}}}{\|f\|}_{\dot{B}_{p_{2},q_{2}}^{s_{2}}} \quad \text{if }s>0.
    \end{aligned}
    \end{equation*}
\end{proof}
\section{Proof of blowup rates}\label{navier-stokes-blowup-rates}
We now give the
\begin{proof}[Proof of Theorem \ref{main-theorem}]
Note first that the regularity assumptions on $u$ are strong enough to justify the calculations used in this proof.
\\\\
Fix $\epsilon\in[1,2]$ and $p,q\in[1,\frac{n}{2-\epsilon})$.
By virtue of the inequality \eqref{besov-embedding}, it suffices to prove the estimate
\begin{equation}\label{u-blowup}
    {\|u(t)\|}_{\dot{B}_{r,\widetilde{r}}^{s_{r}+\epsilon}} \gtrsim_{\varphi,\epsilon,r} {(T-t)}^{-\epsilon/2}
\end{equation}
for any fixed $r\in[p\vee q\vee2,\frac{n}{2-\epsilon})$ (e.g., $r:=p\vee q\vee2$), where $\widetilde{r}=r$ in the case $\epsilon<2$, and $\widetilde{r}=1$ in the case $\epsilon=2$. By the embeddings $L^{2}\hookrightarrow\dot{B}_{\infty,\infty}^{-n/2}$ and $\dot{B}_{r,\widetilde{r}}^{s_{r}+\epsilon}\hookrightarrow\dot{B}_{\infty,\infty}^{-1+\epsilon}$, the energy estimate $\limsup_{t\nearrow T}{\|u(t)\|}_{L^{2}}<\infty$, the assumption $\lim_{t\nearrow T}{\|u(t)\|}_{\dot{B}_{\infty,\infty}^{-1/2}}=\infty$, and the interpolation inequality
\begin{equation}\label{interpbesonehalf}
    {\|u(t)\|}_{\dot{B}_{\infty,\infty}^{-1/2}} \leq {\|u(t)\|}_{\dot{B}_{\infty,\infty}^{-n/2}}^{\lambda}{\|u(t)\|}_{\dot{B}_{\infty,\infty}^{-1+\epsilon}}^{1-\lambda} \quad \text{for }\lambda=\frac{\epsilon-\frac{1}{2}}{\epsilon-1+\frac{n}{2}},
\end{equation}
we obtain the qualitative blowup estimate
\begin{equation}\label{qualitative-blowup}
    \lim_{t\nearrow T}{\|u(t)\|}_{\dot{B}_{r,\widetilde{r}}^{s_{r}+\epsilon}} = \infty.
\end{equation}
We will prove \eqref{u-blowup} by combining \eqref{qualitative-blowup} with the following ODE lemma.
\begin{lemma}\label{ode-lemma}
    (\cite{mccormick2016}, Lemma 2.1). If $\gamma,c>0$, $\partial_{t}X\leq cX^{1+\gamma}$, and $\lim_{t\nearrow T}X(t)=\infty$, then
    \begin{equation*}
        X(t) \geq {(\gamma c(T-t))}^{-1/\gamma} \quad \text{for all }t\in(0,T).
    \end{equation*}
\end{lemma}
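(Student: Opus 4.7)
The plan is to reduce the assumed differential inequality to one that integrates explicitly, by comparison with the pure blow-up ODE $Y' = cY^{1+\gamma}$, whose maximal solutions are precisely $Y(t) = (c\gamma(T-t))^{-1/\gamma}$. The transformation $Y := X^{-\gamma}$ converts $\partial_{t}X \leq cX^{1+\gamma}$ into a reverse linear bound in $t$ which can be integrated directly.

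First I would observe that $X > 0$ throughout $(0,T)$: if $X(t_0) \leq 0$ for some $t_0 \in (0,T)$, then the hypothesis forces $\partial_{t}X(t_0) \leq 0$, and a continuation argument keeps $X$ non-positive on $[t_0,T)$, contradicting $\lim_{t\nearrow T}X(t) = \infty$. With positivity in hand, divide the inequality by $X^{1+\gamma}$ and recognise the left side as an exact derivative:
\begin{equation*}
    \frac{d}{dt}\bigl(X^{-\gamma}\bigr) \;=\; -\gamma X^{-\gamma-1}\partial_{t}X \;\geq\; -c\gamma.
\end{equation*}
Integrating from $t \in (0,T)$ to any $s \in (t,T)$ gives $X(s)^{-\gamma} - X(t)^{-\gamma} \geq -c\gamma(s-t)$. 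Letting $s \nearrow T$ and using $X(s) \to \infty$ (so $X(s)^{-\gamma} \to 0$) yields $X(t)^{-\gamma} \leq c\gamma(T-t)$, which rearranges to the claimed lower bound.

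The main obstacle is the positivity of $X$, without which the division by $X^{1+\gamma}$ and the use of the antiderivative $-X^{-\gamma}/\gamma$ are ill-defined. In the intended application to $X(t) = {\|u(t)\|}_{\dot{B}^{s_{r}+\epsilon}_{r,\widetilde{r}}}$, positivity is automatic because Besov seminorms are non-negative and the qualitative blow-up \eqref{qualitative-blowup} rules out vanishing near $T$; any mild failure of classical differentiability can be absorbed by phrasing the integration step at the level of $X^{-\gamma}$ as a locally absolutely continuous function of $t$.
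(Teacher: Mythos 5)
Your argument is correct and coincides with the proof the paper relies on: the lemma is quoted from \cite{mccormick2016} (Lemma 2.1), where it is established by exactly this integration of $\frac{\mathrm{d}}{\mathrm{d}t}\left(X^{-\gamma}\right)\geq-c\gamma$ from $t$ up to $T$ together with the observation that $X^{-\gamma}\to0$ as $t\nearrow T$. The only blemish is the claim that $X(t_{0})\leq0$ forces $\partial_{t}X(t_{0})\leq0$ (false for $X(t_{0})<0$, e.g.\ $\gamma=1$ gives $cX^{2}>0$), but this is immaterial here since in the application $X$ is a power of a seminorm, hence non-negative, and any interior zero is excluded by running your integration on the interval to the right of the last zero and letting $t$ decrease to that zero.
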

Our goal is to derive a suitable differential inequality that allows us to apply Lemma \ref{ode-lemma}. We will achieve this by considering the antisymmetric tensor\footnote{In the case $n=3$, this is related to the vorticity vector $\overrightarrow{\omega}:=\nabla\times u$ by $\overrightarrow{\omega}={(\omega_{23},\omega_{31},\omega_{12})}^{T}$. One could view $\omega$ and $\overrightarrow{\omega}$ as being different ways of representing the exterior derivative of the 1-form $\sum_{i=1}^{n}u_{i}\,\mathrm{d}x^{i}$.}
\begin{equation}\label{w-from-u}
    \omega_{ij} := \nabla_{i}u_{j}-\nabla_{j}u_{i}.
\end{equation}
Since $u$ is divergence-free, we can express $u$ in terms of $\omega$ by the formula
\begin{equation}\label{u-from-w}
    u_{i} = {(-\Delta)}^{-1}\nabla_{j}\omega_{ij}.
\end{equation}
By Lemma \ref{useful-inequalities}, we deduce that \eqref{u-blowup} is equivalent to
\begin{equation}\label{w-blowup}
    {\|\omega(t)\|}_{\dot{B}_{r,\widetilde{r}}^{s_{r}+\epsilon-1}} \gtrsim_{\varphi,\epsilon,r} {(T-t)}^{-\epsilon/2},
\end{equation}
and that \eqref{qualitative-blowup} is equivalent to
\begin{equation}\label{w-qualitative-blowup}
    \lim_{t\nearrow T}{\|\omega(t)\|}_{\dot{B}_{r,\widetilde{r}}^{s_{r}+\epsilon-1}} = \infty.
\end{equation}
Applying the operator $X\mapsto\nabla_{i}X_{j}-\nabla_{j}X_{i}$ to the Navier-Stokes equations \eqref{intro-navier-stokes}, we see that $\omega$ satisfies
\begin{equation}\label{w-equations}
    \partial_{t}\omega_{ij}-\Delta\omega_{ij}+(u\cdot\nabla)\omega_{ij}+\omega_{ik}\nabla_{k}u_{j}=\omega_{jk}\nabla_{k}u_{i}.
\end{equation}
Applying $\dot{\Delta}_{J}$ to the equation \eqref{w-equations}, multiplying the result by ${|\dot{\Delta}_{J}\omega|}^{r-2}\dot{\Delta}_{J}\omega_{ij}$, summing over $i,j$ and integrating over $\mathbb{R}^{n}$, we obtain
\begin{equation}\label{w-energy}
\begin{aligned}
    &\frac{1}{r}\frac{\partial}{\partial t}\left({\|\dot{\Delta}_{J}\omega\|}_{L^{r}}^{r}\right) - \left\langle\Delta\dot{\Delta}_{J}\omega,{|\dot{\Delta}_{J}\omega|}^{r-2}\dot{\Delta}_{J}\omega\right\rangle \\
    &\quad = -\left\langle\dot{\Delta}_{J}((u\cdot\nabla)\omega),{|\dot{\Delta}_{J}\omega|}^{r-2}\dot{\Delta}_{J}\omega\right\rangle - \left\langle\dot{\Delta}_{J}(\omega_{ik}\nabla_{k}u_{j}-\omega_{jk}\nabla_{k}u_{i}),{|\dot{\Delta}_{J}\omega|}^{r-2}\dot{\Delta}_{J}\omega_{ij}\right\rangle.
\end{aligned}
\end{equation}
By the identities $\nabla\cdot u=0$, $\nabla_{k}\left({|\dot{\Delta}_{J}\omega|}^{r}\right)=r(\nabla_{k}\dot{\Delta}_{J}\omega_{ij}){|\dot{\Delta}_{J}\omega|}^{r-2}\dot{\Delta}_{J}\omega_{ij}$ and $\omega_{ij}=-\omega_{ji}$, we see that the right hand side of \eqref{w-energy} is equal to
\begin{equation*}
    \left\langle[u\cdot\nabla,\dot{\Delta}_{J}]\omega,{|\dot{\Delta}_{J}\omega|}^{r-2}\dot{\Delta}_{J}\omega\right\rangle - 2\left\langle\dot{\Delta}_{J}(\omega\cdot\nabla u),{|\dot{\Delta}_{J}\omega|}^{r-2}\dot{\Delta}_{J}\omega\right\rangle,
\end{equation*}
where we define ${(\omega\cdot\nabla u)}_{ij}:=\omega_{ik}\nabla_{k}u_{j}$. Writing $\Omega_{J}:=[u\cdot\nabla,\dot{\Delta}_{J}]\omega-2\dot{\Delta}_{J}(\omega\cdot\nabla u)$, and noting the inequality\footnote{Valid for $n\geq3$ and $r\in[2,\infty)$, proved in \cite[Lemmas 1-2]{robinson2014}.}
\begin{equation*}\label{lhs-bound}
    -\left\langle\Delta v,{|v|}^{r-2}v\right\rangle \gtrsim_{n,r} {\|v\|}_{L^{\frac{rn}{n-2}}}^{r},
\end{equation*}
we deduce that
\begin{equation}\label{w-energy-2}
    \frac{\partial}{\partial t}\left({\|\dot{\Delta}_{J}\omega\|}_{L^{r}}^{r}\right) + {\|\dot{\Delta}_{J}\omega\|}_{L^{\frac{rn}{n-2}}}^{r} \lesssim_{n,r} \left\langle\Omega_{J},{|\dot{\Delta}_{J}\omega|}^{r-2}\dot{\Delta}_{J}\omega\right\rangle.
\end{equation}

{\bf The case $\boldsymbol{\epsilon<2}$.} Suppose first that $\epsilon \in [1,2)$.  From \eqref{w-energy-2} we have
\begin{equation}\label{w-energy-3}
    \frac{\partial}{\partial t}\left({\|\omega\|}_{\dot{B}_{r,r}^{s_{r}+\epsilon-1}}^{r}\right) + {\|\omega\|}_{\dot{B}_{\frac{rn}{n-2},r}^{s_{r}+\epsilon-1}}^{r} \lesssim_{n,r} \sum_{J\in\mathbb{Z}}2^{Jr(s_{r}+\epsilon-1)}\left\langle\Omega_{J},{|\dot{\Delta}_{J}\omega|}^{r-2}\dot{\Delta}_{J}\omega\right\rangle.
\end{equation}
Since $\epsilon\in(0,2)$ and $r\in(1,\infty)$, the interval $I_{n,\epsilon,r}:=(\frac{2n}{r}-\frac{4}{r},\frac{2n}{r})\cap(\frac{2n}{r}-2+\epsilon,\frac{2n}{r}-\frac{2}{r}+\epsilon)$ is non-empty, so we are free to choose $r_{1}$ satisfying $\frac{2n}{r_{1}}\in I_{n,\epsilon,r}$. Let $r_{2}$ and $r_{3}$ be given by
\begin{equation*}\label{r2r3}
    -\frac{n}{r_{2}}=s_{r}+\epsilon-1-\frac{n}{r_{1}}, \quad r_{3}=\frac{r_{1}r_{2}}{r_{1}+r_{2}}.
\end{equation*}
Then $\frac{2n}{r}>\frac{2n}{r_{1}}>\frac{2n}{r}-\frac{4}{r}$ is equivalent to $r<r_{1}<\frac{rn}{n-2}$, while $\frac{2n}{r}-\frac{2}{r}+\epsilon>\frac{2n}{r_{1}}>\frac{2n}{r}-2+\epsilon$ is equivalent to $\frac{rn}{n+2(r-1)}<r_{3}<r$. Therefore
\begin{equation*}\label{r1range}
    {\left(\frac{r'n}{n-2}\right)}'=\frac{rn}{n+2(r-1)}<r_{3}<r<r_{1}<\frac{rn}{n-2}.
\end{equation*}
Writing $r_{4}=r_{3}'(r-1)$, by H\"{o}lder's inequality we have
\begin{equation}\label{rhs-bound}
\begin{aligned}
    \sum_{J\in\mathbb{Z}}2^{Jr(s_{r}+\epsilon-1)}\left\langle\Omega_{J},{|\dot{\Delta}_{J}\omega|}^{r-2}\dot{\Delta}_{J}\omega\right\rangle &\leq \sum_{J\in\mathbb{Z}}2^{Jr(s_{r}+\epsilon-1)}{\|\Omega_{J}\|}_{L^{r_{3}}}{\|\dot{\Delta}_{J}\omega\|}_{L^{r_{4}}}^{r-1} \\
    &\leq {\left\|J\mapsto2^{J(s_{r}+\epsilon-1)}{\|\Omega_{J}\|}_{L^{r_{3}}}\right\|}_{l^{r}}{\|\omega\|}_{\dot{B}_{r_{4},r}^{s_{r}+\epsilon-1}}^{r-1}.
\end{aligned}
\end{equation}
Since ${\left(\frac{r'n}{n-2}\right)}'<r_{3}<r$, it follows that $r'<r_{3}'<\frac{r'n}{n-2}$ and hence $r<r_{4}<\frac{rn}{n-2}$. By \eqref{interpolation-holder}, we deduce that
\begin{equation}\label{mu-interpolation}
    {\|\omega\|}_{\dot{B}_{r_{4},r}^{s_{r}+\epsilon-1}} \leq {\|\omega\|}_{\dot{B}_{r,r}^{s_{r}+\epsilon-1}}^{\mu}{\|\omega\|}_{\dot{B}_{\frac{rn}{n-2},r}^{s_{r}+\epsilon-1}}^{1-\mu} \quad \text{for }\mu=\frac{rn}{2}\left(\frac{1}{r_{4}}-\frac{n-2}{rn}\right).
\end{equation}
We now need to estimate ${\left\|J\mapsto2^{J(s_{r}+\epsilon-1)}{\|\Omega_{J}\|}_{L^{r_{3}}}\right\|}_{l^{r}}$. By the Bony estimates \eqref{bony-estimate-1} and \eqref{bony-estimate-3}, the inequality ${\|v\|}_{\dot{B}_{r_{2},\infty}^{0}}\lesssim_{\varphi}{\|v\|}_{L^{r_{2}}}$, and the assumption $r<\frac{n}{2-\epsilon}$ (which is equivalent to $s_{r}+\epsilon>1$), we have
\begin{equation}\label{product-estimate}
    {\|\omega\cdot\nabla u\|}_{\dot{B}_{r_{3},r}^{s_{r}+\epsilon-1}} \lesssim_{\varphi,\epsilon,r} {\|\omega\|}_{\dot{B}_{r_{1},r}^{s_{r}+\epsilon-1}}{\|\nabla u\|}_{L^{r_{2}}}+{\|\omega\|}_{L^{r_{2}}}{\|\nabla u\|}_{\dot{B}_{r_{1},r}^{s_{r}+\epsilon-1}}.
\end{equation}
On the other hand, by Proposition \ref{commutator-prop} we have $[u\cdot\nabla,\dot{\Delta}_{J}]\omega=\sum_{I=1}^{5}R_{J}^{I}$, where
\begin{equation}\label{commutator-estimate}
\begin{aligned}
    {\left\|J\mapsto2^{J(s_{r}+\epsilon-1)}{\|R_{J}^{1}\|}_{L^{r_{3}}}\right\|}_{l^{r}} &\lesssim_{\varphi,\epsilon,r,r_{1}} {\|\nabla u\|}_{L^{r_{2}}}{\|\omega\|}_{\dot{B}_{r_{1},r}^{s_{r}+\epsilon-1}}, \\
    {\left\|J\mapsto2^{J(s_{r}+\epsilon-1)}{\|R_{J}^{I}\|}_{L^{r_{3}}}\right\|}_{l^{r}} &\lesssim_{\varphi,\epsilon,r,r_{1}} {\|\nabla u\|}_{\dot{B}_{r_{1},r}^{s_{r}+\epsilon-1}}{\|\omega\|}_{\dot{B}_{r_{2},\infty}^{0}} \quad \text{for }I=2,3,4,5.
\end{aligned}
\end{equation}
Combining \eqref{product-estimate} and \eqref{commutator-estimate}, and noting the relations \eqref{w-from-u}-\eqref{u-from-w} and Lemma \ref{useful-inequalities}, we therefore have
\begin{equation}\label{W-estimate}
    {\left\|J\mapsto2^{J(s_{r}+\epsilon-1)}{\|\Omega_{J}\|}_{L^{r_{3}}}\right\|}_{l^{r}} \lesssim_{\varphi,\epsilon,r,r_{1}} {\|\omega\|}_{\dot{B}_{r_{1},r}^{s_{r}+\epsilon-1}\cap L^{r_{2}}}^{2}.
\end{equation}
The indices $r_{1},r_{2}$ we chosen to ensure that $r<r_{1}<\frac{rn}{n-2}$ and $0<s_{r}+\epsilon-1=\frac{n}{r_{1}}-\frac{n}{r_{2}}$, and that the spaces $\dot{B}_{r_{1},r}^{s_{r}+\epsilon-1}$ and $L^{r_{2}}$ have the same scaling; from these conditions we deduce the interpolation inequality
\begin{equation}\label{nu-interpolation}
    {\|\omega\|}_{\dot{B}_{r_{1},r}^{s_{r}+\epsilon-1}\cap L^{r_{2}}} \lesssim_{n,\epsilon,r,r_{1}} {\|\omega\|}_{\dot{B}_{r,r}^{s_{r}+\epsilon-1}}^{\nu}{\|\omega\|}_{\dot{B}_{\frac{rn}{n-2},r}^{s_{r}+\epsilon-1}}^{1-\nu} \quad \text{for }\nu=\frac{rn}{2}\left(\frac{1}{r_{1}}-\frac{n-2}{rn}\right),
\end{equation}
where the estimate on ${\|\omega\|}_{\dot{B}_{r_{1},r}^{s_{r}+\epsilon-1}}$ follows from \eqref{interpolation-holder}, while the estimate on ${\|\omega\|}_{L^{r_{2}}}$ is justified (writing $r_{5}=\frac{rn}{n-2}$ and $s_{r}+\epsilon-1=s_{r_{5}}+\epsilon-1+\frac{2}{r}$) by the embedding ${\|\omega\|}_{L^{r_{2}}}\leq{\|\omega\|}_{\dot{B}_{r_{2},1}^{0}}$, and the calculations
\begin{equation*}
    \text{If }r_{2}\geq r_{5}\text{ :}\quad{\|\omega\|}_{\dot{B}_{r_{2},1}^{0}}\lesssim_{n,\epsilon,r,r_{1}}{\|\omega\|}_{\dot{B}_{r_{2},\infty}^{s_{r_{2}}+\epsilon-1}}^{\nu}{\|\omega\|}_{\dot{B}_{r_{2},\infty}^{s_{r_{2}}+\epsilon-1+\frac{2}{r}}}^{1-\nu}\lesssim_{n}{\|\omega\|}_{\dot{B}_{r_{5},\infty}^{s_{r_{5}}+\epsilon-1}}^{\nu}{\|\omega\|}_{\dot{B}_{r_{5},\infty}^{s_{r_{5}}+\epsilon-1+\frac{2}{r}}}^{1-\nu}
\end{equation*}
\begin{equation*}
    \text{If }r_{2}< r_{5}\text{ :}\quad{\|\omega\|}_{\dot{B}_{r_{2},1}^{0}}\lesssim_{n,\epsilon,r,r_{1}}{\|\omega\|}_{\dot{B}_{r_{2},\infty}^{s_{r_{2}}+\epsilon-1}}^{\rho}{\|\omega\|}_{\dot{B}_{r_{2},\infty}^{s_{r}+\epsilon-1}}^{1-\rho}\lesssim_{n}{\|\omega\|}_{\dot{B}_{r,\infty}^{s_{r}+\epsilon-1}}^{\rho}{\|\omega\|}_{\dot{B}_{r,\infty}^{s_{r}+\epsilon-1}}^{(1-\rho)\sigma}{\|\omega\|}_{\dot{B}_{r_{5},\infty}^{s_{r}+\epsilon-1}}^{(1-\rho)(1-\sigma)}
\end{equation*}
for $\nu,\rho,\sigma\in(0,1)$ determined by \eqref{interpolation-holder}-\eqref{interpolation-geometric}. By the bounds \eqref{rhs-bound} and \eqref{W-estimate}, and the interpolation inequalities \eqref{mu-interpolation} and \eqref{nu-interpolation}, we obtain
\begin{equation*}
    \sum_{J\in\mathbb{Z}}2^{Jr(s_{r}+\epsilon-1)}\left\langle\Omega_{J},{|\dot{\Delta}_{J}\omega|}^{r-2}\dot{\Delta}_{J}\omega\right\rangle \lesssim_{\varphi,\epsilon,r} {\|\omega\|}_{\dot{B}_{r,r}^{s_{r}+\epsilon-1}}^{(r-1)\mu+2\nu}{\|\omega\|}_{\dot{B}_{\frac{rn}{n-2},r}^{s_{r}+\epsilon-1}}^{r+1-[(r-1)\mu+2\nu]},
\end{equation*}
where $\mu$ and $\nu$ are given by \eqref{mu-interpolation} and \eqref{nu-interpolation}.  Noting that
$$\frac{r-1}{r_{4}}+\frac{2}{r_{1}}=1-\frac{1}{r_{3}}+\frac{2}{r_{1}}= 1+\frac{1}{r_{1}}-\frac{1}{r_{2}}=1+\frac{1}{r}+\frac{\epsilon-2}n\, ,$$
we see that
\begin{equation*}
\begin{aligned}
    (r-1)\mu+2\nu &= \frac{rn}{2}\left((r-1)\left(\frac{1}{r_{4}}-\frac{n-2}{rn}\right)+2\left(\frac{1}{r_{1}}-\frac{n-2}{rn}\right)\right) \\
    &= \frac{rn}{2}\left(\frac{r-1}{r_{4}}+\frac{2}{r_{1}}\right)-\frac{(r+1)(n-2)}{2} \\
    &= 1+\frac{r\epsilon}{2}
\end{aligned}
\end{equation*}
and hence\footnote{As a side remark, we observe that if an estimate of the form $\sum_{J\in\mathbb{Z}}2^{Jr(s_{r}+\epsilon-1)}\left\langle\Omega_{J},{|\dot{\Delta}_{J}\omega|}^{r-2}\dot{\Delta}_{J}\omega\right\rangle \lesssim_{\varphi,\epsilon,r,\alpha,\beta} {\|\omega\|}_{\dot{B}_{r,r}^{s_{r}+\epsilon-1}}^{\alpha}{\|\omega\|}_{\dot{B}_{\frac{rn}{n-2},r}^{s_{r}+\epsilon-1}}^{\beta}$ holds for all antisymmetric $\omega$, then necessarily $\alpha=1+\frac{r\epsilon}{2}$ and $\beta=\frac{r}{2}(2-\epsilon)$. This observation can be justified by considering the effect of the rescaling $\omega\mapsto\kappa\omega$ and $x\mapsto 2^{N}x$ for $\kappa>0$ and $N\in\mathbb{Z}$.}
\begin{equation}\label{rhs-bound-2}
    \sum_{J\in\mathbb{Z}}2^{Jr(s_{r}+\epsilon-1)}\left\langle\Omega_{J},{|\dot{\Delta}_{J}\omega|}^{r-2}\dot{\Delta}_{J}\omega\right\rangle \lesssim_{\varphi,\epsilon,r} {\|\omega\|}_{\dot{B}_{r,r}^{s_{r}+\epsilon-1}}^{1+\frac{r\epsilon}{2}}{\|\omega\|}_{\dot{B}_{\frac{rn}{n-2},r}^{s_{r}+\epsilon-1}}^{\frac{r}{2}(2-\epsilon)}.
\end{equation}
By \eqref{w-energy-3}, \eqref{rhs-bound-2} and Young's product inequality, we deduce that
\begin{equation*}
    \frac{\partial}{\partial t}\left({\|\omega\|}_{\dot{B}_{r,r}^{s_{r}+\epsilon-1}}^{r}\right) \lesssim_{n,\epsilon,r} {\|\omega\|}_{\dot{B}_{r,r}^{s_{r}+\epsilon-1}}^{r+\frac{2}{\epsilon}}.
\end{equation*}
Applying Lemma \ref{ode-lemma} with $X(t)={\|\omega(t)\|}_{\dot{B}_{r,r}^{s_{r}+\epsilon-1}}^{r}$ and $\gamma=\frac{2}{r\epsilon}$, and noting \eqref{w-qualitative-blowup}, we conclude that \eqref{w-blowup}${}_{\epsilon<2}$ holds, which (as noted above) implies \eqref{eps-less-2}.

{\bf The case $\boldsymbol{\epsilon=2}$.} By \eqref{w-energy-2} and H\"{o}lder's inequality, for all $J,t$ satisfying $\dot{\Delta}_{J}\omega(t)\neq0$ in $\mathcal{S}'$ we have
\begin{equation}\label{eps2-w-energy}
    \frac{\partial}{\partial t}\left({\|\dot{\Delta}_{J}\omega\|}_{L^{r}}\right) \lesssim_{n,r} {\|\Omega_{J}\|}_{L^{r}}.
\end{equation}
If $\dot{\Delta}_{J}\omega(t_{0})=0$ in $\mathcal{S}'$, then either ${\left.\frac{\partial}{\partial t}\left({\|\dot{\Delta}_{J}\omega\|}_{L^{r}}\right)\right|}_{t=t_{0}}=0$ (in which case \eqref{eps2-w-energy} is true for $t=t_{0}$) or ${\left.\frac{\partial}{\partial t}\left({\|\dot{\Delta}_{J}\omega\|}_{L^{r}}\right)\right|}_{t=t_{0}}\neq0$ (in which case \eqref{eps2-w-energy} is true for $t$ close to $t_{0}$, so by continuity it is true for $t=t_{0}$). Therefore \eqref{eps2-w-energy} holds for all $J\in\mathbb{Z}$ and $t\in(0,T)$, so we can estimate
\begin{equation}\label{eps2-w-energy-2}
    \frac{\partial}{\partial t}\left({\|\omega\|}_{\dot{B}_{r,1}^{s_{r}+1}}\right) \lesssim_{n,r} {\left\|J\mapsto2^{J(s_{r}+1)}{\|\Omega_{J}\|}_{L^{r}}\right\|}_{l^{1}}.
\end{equation}
We now need to estimate ${\left\|J\mapsto2^{J(s_{r}+1)}{\|\Omega_{J}\|}_{L^{r}}\right\|}_{l^{1}}$. By the Bony estimates \eqref{bony-estimate-1} and \eqref{bony-estimate-3}, the inequality ${\|v\|}_{\dot{B}_{\infty,\infty}^{0}}\lesssim_{\varphi}{\|v\|}_{L^{\infty}}$, and the assumption $r<\infty$ (which is equivalent to $s_{r}+1>0$), we have
\begin{equation}\label{eps2-prod}
    {\|\omega\cdot\nabla u\|}_{\dot{B}_{r,1}^{s_{r}+1}} \lesssim_{\varphi,r} {\|\omega\|}_{\dot{B}_{r,1}^{s_{r}+1}}{\|\nabla u\|}_{L^{\infty}} + {\|\omega\|}_{L^{\infty}}{\|\nabla u\|}_{\dot{B}_{r,1}^{s_{r}+1}}.
\end{equation}
On the other hand, by Proposition \ref{commutator-prop} we have $[u\cdot\nabla,\dot{\Delta}_{J}]\omega=\sum_{I=1}^{5}R_{J}^{I}$, where
\begin{equation}\label{eps2-comm}
\begin{aligned}
    {\left\|J\mapsto2^{J(s_{r}+1)}{\|R_{J}^{1}\|}_{L^{r}}\right\|}_{l^{1}} &\lesssim_{\varphi,r} {\|\nabla u\|}_{L^{\infty}}{\|\omega\|}_{\dot{B}_{r,1}^{s_{r}+1}}, \\
    {\left\|J\mapsto2^{J(s_{r}+1)}{\|R_{J}^{I}\|}_{L^{r}}\right\|}_{l^{1}} &\lesssim_{\varphi,r} {\|\nabla u\|}_{\dot{B}_{r,1}^{s_{r}+1}}{\|\omega\|}_{\dot{B}_{\infty,\infty}^{0}} \quad \text{for }I=2,3,4,5.
\end{aligned}
\end{equation}
Combining \eqref{eps2-prod} and \eqref{eps2-comm}, and noting the relations \eqref{w-from-u}-\eqref{u-from-w} and Lemma \ref{useful-inequalities}, we therefore have
\begin{equation}\label{eps2-W-estimate}
    {\left\|J\mapsto2^{J(s_{r}+1)}{\|\Omega_{J}\|}_{L^{r}}\right\|}_{l^{1}} \lesssim_{\varphi,r} {\|\omega\|}_{\dot{B}_{r,1}^{s_{r}+1}}^{2}.
\end{equation}
By \eqref{eps2-w-energy-2} and \eqref{eps2-W-estimate} we have
\begin{equation*}
    \frac{\partial}{\partial t}\left({\|\omega\|}_{\dot{B}_{r,1}^{s_{r}+1}}\right) \lesssim_{\varphi,r} {\|\omega\|}_{\dot{B}_{r,1}^{s_{r}+1}}^{2}.
\end{equation*}
Hence if $\epsilon =2$, applying Lemma \ref{ode-lemma} with $X(t)={\|\omega\|}_{\dot{B}_{r,1}^{s_{r}+1}}$ and $\gamma=1=\frac{2}{\epsilon}$, and noting \eqref{w-qualitative-blowup}${}_{\epsilon=2}$, we conclude that \eqref{w-blowup}${}_{\epsilon=2}$ holds, which implies \eqref{eps-is-2} and completes the proof of Theorem \ref{main-theorem}.
\end{proof}

\end{document}